\theoremstyle{plain}
\newtheorem{thm}{Theorem}
\newtheorem{lemblank}{Lemma}
\newtheorem{lem}[thm]{Lemma}
\newtheorem{prop}[thm]{Proposition}
\newtheorem*{claim}{Claim}
\newtheorem{cor}[thm]{Corollary}
\theoremstyle{definition}
\newtheorem*{property}{Property $\mathbf{P(i)}$}
\newcommand{\E}[1]{{\mathbb E}\left[#1\right]}
\newcommand{\p}[1]{{\Pr}\left(#1\right)}
\newcommand{\kp}{{\rm P}}
\newcommand{\rem}{{\rm Q}}
\newcommand{\Eq}{{\rm Eq}}
\newcommand{\Vq}{{\rm Vq}}
\newcommand\expo{{\rm e}}
\newcommand\cC{{\cal C}}
\newcommand\cD{{\cal D}}
\newcommand\cE{{\cal E}}
\newcommand\ctn{{\Psi_1}}
\renewcommand\d{{\Delta}}
\newcommand\eps{{\varepsilon}}
\title{Acyclic edge colourings of graphs with large girth\footnote{\noindent Part of this work was done in April 2014 during the Workshop on Structural Graph Theory at McGill's Bellairs Institute. We warmly thank the organisers for the collaborative opportunity. The second and the third author also want to thank Ross Kang and Colin McDiarmid for fruitful discussions on this topic. Finally, the authors would like to thank the anonymous referees for carefully reading the manuscript and for the valuable comments provided.}}
\author{X. S. Cai\thanks{School of Computer Science, McGill University, 845 Sherbrooke Street West, Montreal, Quebec, Canada H3A 0G4. E-mail: xingshi.cai@mail.mcgill.ca},$\;$ G. Perarnau\thanks{School of Mathematics, University of Birmingham, Birmingham, B15 2TT, UK. E-mail: g.perarnau@bham.ac.uk},$\;$ B. Reed\thanks{CNRS, France; Kawarabayashi Large Graph ERATO Project, Japan. E-mail: breed@sophia.inria.fr}$\;$ and A. B. Watts\thanks{School of Computer Science, McGill University, 845 Sherbrooke Street West, Montreal, Quebec, Canada H3A 0G4. E-mail: adam.benwatts@gmail.com}}
\date{\today}
\begin{document}
\pagenumbering{arabic}

\setcounter{section}{0}

\maketitle

\onehalfspace

\begin{abstract}
An edge colouring of a graph $G$ is called acyclic if it is proper and every
cycle contains at least three colours. We show that for every $\varepsilon>0$,
there exists a $g=g(\varepsilon)$ such that if $G$ has maximum degree $\Delta$ and girth at least $g$ then
$G$ admits an acyclic edge colouring with $(1+\eps)\Delta{+O(1)}$ colours.
\end{abstract}
\section{Introduction}

\noindent An edge colouring of a graph $G=(V(G),E(G))$ is called \textit{acyclic} if it is proper (two edges that share an end point have different colours) and every cycle contains at least three colours (otherwise stated, there are no bicoloured cycles). The acyclic chromatic index of $G$, denoted by $a'(G)$, is the minimum number of colours used in an acyclic edge colouring of $G$. Acyclic colourings were introduced by Gr\"unbaum~\cite{grunbaum1973acyclic} in the context of vertex colouring of planar graphs (see~\cite{jensen2011graph} for more information on acyclic colourings).

Here we consider the problem of determining $a'(G)$ for a graph $G$ with bounded maximum degree. It is well known, that a proper edge colouring of a graph $G$ with maximum degree $\Delta$ may require $\Delta+1$ colours. Fiam{\v{c}}ik~\cite{fiamcik1978acyclic} and Alon, Sudakov and Zacks~\cite{alon2001acyclic} independently conjectured that $a'(G)\leq \Delta+2$. From an intuitive point of view, the conjecture states that at most one extra colour is needed to break all the bicoloured cycles of $G$. If true, the conjecture is best possible: there exists an infinite family of graphs $G$ for which $a'(G)= \Delta+2$~\cite{alon2001acyclic}.

Alon, McDiarmid and Reed~\cite{alon1991acyclic} showed that $a'(G)\leq 64 \Delta$. This result initiated a series of papers devoted to improve the constant. Molloy and Reed~\cite{molloy1998further} showed that $a'(G)\leq 16 \Delta$. This bound was improved by Ndreca, Procacci and Scoppola~\cite{ndreca2012improved} to $a'(G)\leq \lceil 9.62 (\Delta-1)\rceil$ using a new version of the L\'ovasz Local Lemma based on cluster expansion~\cite{bissacot2011improvement}. Esperet and Parreau~\cite{esperet2013acyclic} used the entropy compression method, based on the algorithmic version of the local lemma~\cite{moser2010constructive}, to show that $a'(G)\leq 4 \Delta-4$. 

The same problem has been studied in terms of the girth $g$ of $G$. Alon, Sudakov and Zacks~\cite{alon2001acyclic} showed that the acyclic edge colouring conjecture is true ($a'(G)\leq \Delta+2$) if $g\geq  c\Delta\log{\Delta}$, for some large constant $c$. They also noted that $a'(G)\leq 2\Delta+2$ if $g\geq  (1+o(1))\log{\Delta}$.
Unfortunately, the previous conditions imply that these results are only valid if the girth goes to infinity when $\Delta\to +\infty$. Weaker results have been obtained in the case when the girth is not too large~\cite{esperet2013acyclic,muthu2007improved,ndreca2012improved}. The best known result for large girth~\cite{Giota2014}, states that if $g\geq 219$, then $a'(G)\leq \lceil 2.323 (\Delta-1)\rceil+1$.

Recently, Bernshteyn~\cite{Bernshteyn2014} showed that for every $\eps>0$, $G$
has acyclic chromatic index at most $(2+\eps)\Delta$ provided that the girth of
$G$ is large only with respect to $\eps$. This result uses an Action
version of the Lov\'asz Local Lemma that refines the entropy compression argument~\cite{Berns2014,gonccalves2014entropy}.

Unfortunately, the standard entropy compression method seems to fail utterly if less than $2\Delta-1$ colours are used.
The heart of this method is a random recolouring procedure which recolours just one edge at a time.
When applying such a procedure, we can deduce no pseudorandom properties of the colourings
which we construct,
so we have to allow for the possibility that the $2\Delta-2$ edges adjacent to the edge that we
are recolouring all have different colours.

In this paper we improve the previous result of Bernshteyn by providing an asymptotically tight upper bound using an iterative colouring procedure.
\begin{thm}
    \label{thm:main}
    For every $\eps>0$, there exist constants $g=g(\eps)$ and
    $\Delta_0=\Delta_0(\eps)$, such that if $G$ has maximum degree
    $\Delta \geq \Delta_0$ and girth at least $g$, then
    $$
    a'(G)\leq (1+\eps)\Delta\;.
    $$
\end{thm}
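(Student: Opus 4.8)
The plan is to use the semi-random (nibble) method together with an iterative "wasteful colouring" procedure. Let me sketch this out.

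The core idea: we maintain a partial proper edge colouring, and a list of "available" colours for each uncoloured edge. Initially every edge has all $(1+\eps)\Delta$ colours available. We proceed in rounds. In each round, we activate each uncoloured edge independently with some small probability $p$, assign it a uniformly random available colour, then "un-activate" (keep uncoloured) any edge that either creates a proper-colouring conflict or creates/threatens a bicoloured cycle. Crucially, for the bicoloured-cycle constraint we need to be careful: because the girth is large (at least $g(\eps)$), any potential bicoloured cycle has length at least $g$, so when we colour an edge $uv$ with colour $c$, the relevant "bad event" is that there already exists a long bicoloured $c/c'$-path between $u$ and $v$ — and such paths are rare. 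This is precisely where large girth buys us room that the entropy compression method cannot exploit.

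The key quantities to track through the iteration are, for each uncoloured edge $e$: $L_i(e)$, the size of its list of available colours, and $t_i(e)$, the number of uncoloured edges adjacent to $e$ (or more precisely, the number of "competing" pairs). I would set up the standard nibble recursion: after round $i$, with $d_i$ the max "uncoloured degree" and $L_i$ the min list size, show that with the right choice of activation probability $p_i$ (something like a fixed small constant, or slowly varying), we get concentration so that $d_{i+1} \approx d_i e^{-p_i}$ (roughly) while $L_{i+1}$ shrinks more slowly, so that the ratio $L_i / d_i$ stays bounded below — in fact we want $L_i \geq (1+\eps') d_i$ throughout for some $\eps' > 0$. The reason the list does not shrink too fast: a colour $c$ is removed from the list of $e = uv$ only if some neighbouring edge gets permanently coloured $c$, OR if colouring $e$ with $c$ would close a bicoloured cycle; the first happens to at most $d_i$ colours in expectation scaled by the success probability, and the second — thanks to girth — contributes only a lower-order loss, since the expected number of bicoloured $2$-coloured paths of length $\geq g-1$ between the endpoints of a fixed edge is tiny when $p$ is small and $g$ is large. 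I would make this rigorous via the Lovász Local Lemma (or Talagrand's inequality / the bounded-differences concentration used in Molloy–Reed-style arguments) to show all these quantities are concentrated around their expectations simultaneously with positive probability.

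The iteration runs until the uncoloured degree drops below some constant threshold $d^* = d^*(\eps)$; at that point each uncoloured edge still has $\geq (1+\eps') d^* \geq$ (some absolute constant) $\times$ (number of constraints on it) available colours, and we finish greedily — or rather, we invoke the Local Lemma one final time on the leftover graph, which now has bounded degree, using that each edge has far more available colours than forbidden ones (proper constraints plus the rare bicoloured-cycle constraints, which are controlled by girth exactly as before). Summing the colours: we only ever use the original palette of size $(1+\eps)\Delta$, so the bound follows, provided $\Delta \geq \Delta_0(\eps)$ is large enough for the concentration inequalities to kick in and $g = g(\eps)$ is large enough to kill the bicoloured-cycle error terms at every scale.

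The main obstacle I expect is controlling the bicoloured-cycle constraint throughout the iteration in a way that is compatible with the nibble concentration arguments. Unlike the plain proper-colouring constraint (which is "local" — depends only on neighbouring edges), the bicoloured-cycle constraint for edge $uv$ and colour $c$ depends on the existence of an alternating $c/c'$-path between $u$ and $v$ of length up to the girth — this is a non-local, global condition, and its probability in the partially-coloured random graph is delicate to estimate and to show is concentrated. The trick must be: (i) large girth forces such paths to be long, hence of exponentially small probability in $p$; (ii) we should track not the exact event but a conservative proxy — e.g. only forbid colour $c$ at $uv$ if there is a long path coloured entirely in $\{c, c'\}$ for the specific second colour $c'$ already on an edge at $u$ or $v$ — and bound the number of such proxies by a union bound over $c'$ (at most $\Delta$ choices) times the (tiny) path-probability. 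Getting the bookkeeping so that this loss is genuinely $o(d_i)$ at every round $i$, uniformly, while also not destroying the independence/Lipschitz structure needed for Talagrand, is the technical heart of the argument.
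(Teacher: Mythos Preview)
Your overall architecture (iterative semi-random colouring, tracking list sizes versus competing degrees, Local Lemma to handle bicoloured-cycle bad events, finishing phase) is correct and matches the paper's. However, there is a genuine gap in your plan that prevents it from delivering the theorem as stated.

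The problem is your stopping rule. You propose to iterate until the uncoloured degree drops below a constant $d^* = d^*(\eps)$, and then finish on a bounded-degree leftover. But each round multiplies the uncoloured degree by a constant factor strictly less than $1$, so reaching $d^*$ requires $\Theta(\log \Delta)$ rounds. Now fix a cycle $C$ of length $g$. In each round a constant fraction of its uncoloured edges get coloured, so after $\Theta(\log \Delta)$ rounds the number of uncoloured edges left on $C$ is essentially $g / \Delta^{\Theta(1)}$. For your bicoloured-cycle argument to survive (you need at least a few uncoloured edges remaining on every potentially bicoloured cycle at the finishing stage), you would need $g \gtrsim \Delta^{\Theta(1)}$, or at the very least $g$ growing with $\Delta$. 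That is exactly the Alon--Sudakov--Zaks regime, not $g = g(\eps)$.

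The paper fixes this with an idea you are missing entirely: before iterating, it \emph{reserves} a small set $S_v \subset [(1+\eps)\Delta]$ of about $\eps\Delta/3$ colours at each vertex, with $|S_u \cap S_v| \geq \gamma\Delta$ for each edge $uv$ (where $\gamma = \eps^2/18$). The iterative phase uses only non-reserved colours, and it runs for only $i^* = O(\log(1/\eps))$ rounds --- not until the uncoloured degree is a constant, but merely until, for every vertex $v$ and colour $c$, the number of uncoloured edges $vu$ with $c \in S_u$ drops below $\gamma^2\Delta/128$. Because $i^*$ depends only on $\eps$, a girth of $g(\eps)$ suffices to keep $\Lambda_{i^*+1} \geq 3$ free edges on every significant cycle. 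The finishing phase then colours each remaining edge $uv$ from $S_u \cap S_v$: the list has size $\gamma\Delta$ while each colour on it competes with at most $\gamma^2\Delta/128$ adjacent edges, which is exactly the slack needed for a single Local Lemma application to handle both proper-colouring and bicoloured-cycle constraints. Your ``forbid $c$ at $uv$ whenever a long alternating $\{c,c'\}$-path exists'' proxy is also not what the paper does; the paper never removes colours for cycle reasons during the iteration, and instead adds to the Local Lemma a family of events $A_C^{\{c,d\}}$ asserting that a given cycle has too few free edges left, bounding their probabilities directly.
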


The proof of this theorem uses the iterative edge colouring procedure introduced by Kahn to provide a list edge
colouring of a graph with $(1+o(1))\Delta$ colours~(see \cite{kahn1996asymptotically,molloy2000near}
or Chapter 14 in~\cite{Molloy2002graph}). Our main contribution is to track the partially coloured
cycles in $G$ that are still able to become bicoloured. In particular, we use the large
girth condition to make sure that at each iteration there are many uncoloured edges in any such cycle. Since the number of iterations of our procedure depends only on $\eps$, we need
the girth to be large only with respect to $\eps$.

We can restate the previous theorem in order that it holds for every $\Delta$.
\begin{cor}
For every $\eps>0$, there exist constants $C=C(\eps)$ and $g=g(\eps)$ such that
if $G$ has maximum degree $\Delta$ and girth at least $g$,
$$
    a'(G)\leq (1+\eps)\Delta+C\;.
$$
\end{cor}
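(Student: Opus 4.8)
\textit{Proof proposal.} The plan is to split into the cases of large and small maximum degree, with all of the content coming from Theorem~\ref{thm:main}. Fix $\eps>0$ and let $g(\eps)$ and $\Delta_0(\eps)$ be the constants supplied by Theorem~\ref{thm:main}. I claim the corollary holds with this same $g(\eps)$ and with $C(\eps)$ a suitable multiple of $\Delta_0(\eps)$; to be concrete I would set $C(\eps):=4\Delta_0(\eps)$. So let $G$ be a graph of maximum degree $\Delta$ and girth at least $g(\eps)$.

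If $\Delta\geq\Delta_0(\eps)$, then Theorem~\ref{thm:main} applies verbatim and gives $a'(G)\leq(1+\eps)\Delta\leq(1+\eps)\Delta+C(\eps)$, since $C(\eps)\geq 0$; nothing further is needed in this range. If instead $\Delta<\Delta_0(\eps)$, there are only finitely many possibilities for $\Delta$, each bounded by a constant depending only on $\eps$, so I would invoke any known unconditional linear upper bound on the acyclic chromatic index — for instance $a'(G)\leq 4\Delta-4$ of Esperet and Parreau~\cite{esperet2013acyclic}, with the handful of tiny values of $\Delta$ handled by the trivial greedy bound — to conclude $a'(G)<4\Delta_0(\eps)=C(\eps)\leq(1+\eps)\Delta+C(\eps)$.

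Combining the two ranges proves the corollary with the stated $g(\eps)$ (inherited directly from Theorem~\ref{thm:main}) and $C(\eps)=4\Delta_0(\eps)$. I do not anticipate any real obstacle here: the corollary merely removes the hypothesis $\Delta\geq\Delta_0$ from Theorem~\ref{thm:main} by absorbing the finitely many small-degree instances into the additive constant $C$, and the only (cosmetic) decision is which off-the-shelf $O(\Delta)$ bound to quote for those instances — even the crude $O(\Delta^2)$ bound obtained by greedily colouring the square of the line graph would suffice.
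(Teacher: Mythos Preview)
Your proposal is correct and matches the paper's intent: the paper does not give an explicit proof of the corollary but simply presents it as a restatement of Theorem~\ref{thm:main} valid for all $\Delta$, which is exactly the two-range argument you describe (apply the theorem for $\Delta\ge\Delta_0(\eps)$, absorb the bounded-degree cases into $C$ via any off-the-shelf linear bound).
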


From the proof of Theorem~\ref{thm:main}, one can easily derive an explicit lower bound on $g$
in terms of $\eps$; in particular, it suffices to set $g(\eps)= \varepsilon^{-O(1)}$.
We believe that this bound on $g$ in terms of $\eps$ can be improved but probably a new
approach is needed to get rid of the dependence {on} $\eps$ in the girth.

As far as we know, this is the first result on acyclic edge colourings of graphs with bounded girth that uses the asymptotically optimal number of colours. Finally, the proof of Theorem~\ref{thm:main} can be easily adapted to deal with acyclic list edge colourings.

\section{Reduction to regular graphs}\label{sec:regular}

\noindent {In this section we show that in order to prove Theorem~\ref{thm:main} it suffices to do it for $\Delta$-regular graphs. Although this is not a necessary step, it will simplify the proof of the main theorem.}
\begin{prop}\label{prop:regular}
If every $\Delta$-regular graph {with} girth at least $g$ admits an acyclic edge colouring with $a$ colours, then every graph with maximum degree $\Delta$ and girth at least $g$ also does.
\end{prop}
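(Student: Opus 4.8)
The plan is to reduce the proposition to the following embedding fact and then transfer the colouring: \emph{every graph $G$ with maximum degree $\Delta$ and girth at least $g$ is a subgraph of some $\Delta$-regular graph $H$ with girth at least $g$.} Granting this, the proposition is immediate: by hypothesis $H$ admits an acyclic edge colouring with $a$ colours, and its restriction to $E(G)\subseteq E(H)$ is still proper (edges sharing an endpoint in $G$ share one in $H$) and still has no bicoloured cycle (every cycle of $G$ is a cycle of $H$), so $G$ admits an acyclic edge colouring with at most $a$ colours. Since $\Delta\le 1$ is trivial, assume $\Delta\ge 2$.

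To build $H$, let $s=\sum_{v\in V(G)}(\Delta-d_G(v))$ be the total degree deficiency. If $s$ is odd, first replace $G$ by two disjoint copies of itself; this changes neither the maximum degree nor the girth, and an embedding of the disjoint union restricts to one of $G$, so we may assume $s=2k$ is even. Fix a connected $\Delta$-regular graph $F$ of girth at least $g$ (a classical fact for every $\Delta\ge 2$) and an edge $e=xy$ lying on a shortest cycle of $F$, so that $F-e$ is connected and every $x$--$y$ path in $F-e$ has length at least $g-1$. Let $B$ be the disjoint union of $k$ copies $F_1-e_1,\dots,F_k-e_k$ of $F-e$, and let $x_i,y_i$ be the two vertices of degree $\Delta-1$ in the $i$-th copy. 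Finally, obtain $H$ from $G$ and $B$ (taken disjoint) by adding a perfect matching between the $s$ deficiency slots of $G$ (vertex $v$ contributing $\Delta-d_G(v)$ of them) and the $s$ vertices $x_1,y_1,\dots,x_k,y_k$. A short bookkeeping check shows that $H$ is simple and $\Delta$-regular: every $v\in V(G)$ gains exactly $\Delta-d_G(v)$ edges, every $x_i$ and $y_i$ gains exactly one, all other vertices already have degree $\Delta$, no two added edges are parallel because the $x_i,y_i$ are distinct, and no added edge duplicates an existing one because $V(G)$ and $V(B)$ are disjoint.

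It remains to check that the girth of $H$ is at least $g$. Let $C$ be a cycle of $H$. If $C$ uses no added edge, then $C$ lies inside $G$ or inside a single copy $F_i-e_i$, hence $|C|\ge g$. Otherwise, since the added edges form an edge cut between $V(G)$ and $V(B)$, the cycle $C$ uses at least two of them; moreover, whenever $C$ meets a copy $F_i-e_i$ it must leave it, and the only edges of $H$ leaving that copy are the two added edges at $x_i$ and $y_i$, so $C$ uses both of these and contains an $x_i$--$y_i$ path inside $F_i-e_i$, of length at least $g-1$. If $C$ meets $m\ge 1$ such copies we get $|C|\ge m(g-1)+2m\ge g+1$. (When $e$ is a bridge of $F$ the argument only gets easier, as the corresponding added edges are then bridges of $H$ and lie on no cycle at all.)

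The crux is exactly this girth verification, and the device that makes it work is to route each deficiency slot of $G$ into its \emph{own} private copy of $F-e$, rather than adding edges directly among deficient vertices or sharing a single copy of $F$: this is what forces any cycle through a new edge to pay the full crossing cost $g-1$ of a copy of $F-e$. Everything else — the existence of $F$, the parity reduction, and the degree and simplicity bookkeeping — is routine.
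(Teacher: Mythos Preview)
Your proof is correct and takes a genuinely different, more elementary route than the paper. The paper embeds $G$ into a $\Delta$-regular graph of girth $\ge g$ by a product-type construction: it takes $|V(H)|$ disjoint copies of $G$ indexed by the vertices of an auxiliary $\Delta^g$-regular bipartite graph $H$ of girth $\ge g$, properly colours the $g$-th power of $G$ with $\Delta^g$ colours, properly edge-colours $H$, and then adds an edge between the two copies of a deficient vertex $v$ sitting at $u_1,u_2\in V(H)$ whenever $u_1u_2\in E(H)$ has colour equal to the colour of $v$; this raises the minimum degree by one, and the whole process is iterated up to $\Delta$ times. The girth check is then an argument about closed walks in $H$ and the distance colouring of $G$. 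Your construction instead absorbs each unit of deficiency into its own private copy of $F-e$ for a fixed $\Delta$-regular $F$ of girth $\ge g$, so a single step suffices and the girth verification reduces to the clean observation that any cycle entering a copy must traverse a full $x_i$--$y_i$ path of length $\ge g-1$. Both arguments ultimately rely on the classical existence of $\Delta$-regular graphs of large girth, but yours uses it once as a black box rather than building the product machinery; it is shorter and arguably cleaner, at the cost of being slightly less self-contained about where $F$ comes from. The parenthetical about $e$ being a bridge is superfluous (since $e$ lies on a cycle by choice), but harmless.
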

\begin{proof}

It is enough to show that we can embed $G$ into a $\Delta(G)$-regular
graph with the same girth. We do so now.

Suppose that $G_0=G$ is not regular. We will construct a graph $G_1$ by
considering many disjoint copies of $G_0$ and adding some edges between them. We will show that $G_1$ has a larger  minimum degree while it has the same maximum degree and girth.

The $r$-th power a graph $G$ is a graph which has the same vertex
set of $G$, but in which vertices are adjacent when their distance in $G$ is at
most $r$. Let $f_G$ be a colouring of the vertices of the $g$-th power of
$G_0$ using the set of colours $\{1,\dots,\Delta^g\}$. Since $G_0$ has maximum
degree $\Delta$, $\Delta^g$ colours suffice to obtain a proper colouring.

Let $H$ be a $\Delta^g$-regular bipartite graph with girth at least $g$. Since bipartite graphs are Class $I$ (see Lemma 1.4.18 in~\cite{lp1986}), we can obtain a $\Delta^g$ edge colouring $f_H$ of $H$; that is each of the $\Delta^g$ colours is incident to every vertex in $H$.

Now consider $G_1$ composed of $|V(H)|$ many disjoint copies of $G_0$ where
we add an edge between two copies of $v\in V(G_0)$ corresponding to
vertices $u_1,u_2\in V(H)$, if the degree of $v$ in $G_0$ is smaller than
$\Delta$, $u_1u_2\in E(H)$ and $f_H(u_1u_2)=f_G(v)$. The copy of $G_0$ in
$G_1$ corresponding to $u\in V(H)$ will be denoted by $G_0^{u}$.

Observe that the graph $G_1$ has the same maximum degree
as $G_0$ because we only add edges between vertices that have a degree smaller
than $\Delta$. Also the minimum degree has increased by $1$ since for every
vertex $u\in V(H)$ and every colour $c\in\{1,\dots,\Delta^g\}$ there is a
vertex $u'\in V(H)$ such that $uu'\in E(H)$ and $f_H(uu')=c$. Thus, every copy
of a vertex that has degree smaller than $\Delta$ in $G_0$ is incident to a new
edge in $G_1$.

Moreover, $G_1$ has girth at least $g$. Suppose that there is a cycle $C$ of
length $\ell$ which is strictly less than $g$. Since $G_0$ has no cycles of length
$\ell$, the cycle should contain vertices in different copies of $G_0$. In
particular, the cycle induces a closed walk in $H$ of length at least $2$
and at most $\ell$. If this walk contains a cycle, let $C'$ be one of the
minimal ones. Clearly the length of $C'$ is at most $\ell$, but $H$ has
girth at least $g$, a contradiction. Thus, we may assume that the walk is
acyclic. Let $u\in V(H)$ be one of the leaves of the acyclic walk (there at
least two of them) and let $u'\in V(H)$ be its unique neighbour in the
closed walk. Otherwise stated, the cycle at some point enters in a vertex
$v_1\in V(G_0^{u})$ from $G_0^{u'}$, stays in $G_0^{u}$ for a while and
exits from  $v_2\in V(G_0^{u})$ towards $G_0^{u'}$. Since $C$ has length
$\ell$, there exists a path in $G_0^{u}$ from $v_1$ to $v_2$ of length at
most $\ell$. Moreover, since $v_1$ and $v_2$ have edges to $G_0^{u'}$,
$f_G(v_1)=f_H(uu')=f_G(v_2)$. Hence, there are two vertices from $G_0$ at
distance less than $g$ with the same colour, obtaining a contradiction with the
construction of $f_G$.

Repeating the same argument at most $\Delta$ times, we embed the graph $G$ in a
$\Delta$-regular graph $G_\Delta$ with girth at least $g$.
\end{proof}

\section{Outline of the proof of Theorem~\ref{thm:main}}\label{sec:outline}

\noindent In this section we sketch the proof of Theorem~\ref{thm:main}.

Throughout the proof, we will assume that $\eps\leq \eps_0=10^{-3}$. Observe that if we show
Theorem~\ref{thm:main} for any $\eps\leq \eps_0$, then it also holds for all $\eps>0$ by setting
$g(\eps)=g(\eps_0)$ and $\Delta_0(\eps)=\Delta_0(\eps_0)$ if $\eps\geq \eps_0$. For the sake of
clarity of the presentation, we will omit floors and ceilings whenever they are not relevant for the argument.
We use the notations \(O(\cdot), o(\cdot)\), and \(\Omega(\cdot)\) for classes of functions of
\(\Delta\) in the usual sense. We also use \(O(1)\) to denote an implicit constant that does not
depend on \(\Delta\) but could depend on \(\varepsilon\). Unless explicitly stated otherwise, all
logarithms are natural. We use \(V(G)\) and $E(G)$ to denote the vertex and the edge set of \(G\). Finally, for every vertex $v$, we use $N(v)$ to denote the set of neighbours
of $v$.

Our proof is an analogue of the one for list chromatic index in Chapter~14
of~\cite{Molloy2002graph} where at the same time we control how many edges are
coloured in partially bicoloured cycles. We will use the same notations as
in~\cite{Molloy2002graph} and mimic some of the arguments displayed
there. Recall that by Proposition~\ref{prop:regular}, we can assume that the graph $G$ is $\Delta$-regular. This allows us to simplify some technicalities in the proof of Theorem~\ref{thm:main}.

We will prove the following three lemmas from which the
desired result follows.
The first lemma says that {at the beginning} we can reserve a small number of colours for each
vertex, satisfying some useful properties. We are going to use these colours at
the end.
\begin{lemblank}
    \label{lem:pre_reservation}
There exists a collection $\{S_v:\,v\in V(G)\}$ of subsets of $[(1+\eps)\Delta]$ such that
\begin{enumerate}[({A}.1)]
\item  for every vertex $v$, $|S_v| \leq \frac{4\eps\Delta}{9}$,
\item  for every edge $e=uv$, $|S_u\cap S_v| \geq \frac{\eps^2 \Delta}{18}$, and
\item  for every vertex $v$ and every colour $c$, $|\{u\in N(v):\; c\in S_u\}| \leq \frac{\eps\Delta}{2}$.
\end{enumerate}
\end{lemblank}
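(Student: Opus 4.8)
The plan is to construct the sets $S_v$ at random and then invoke the Lov\'asz Local Lemma to show that the three properties can be made to hold simultaneously. Set $p=\eps/3$ and, independently for every vertex $v$ and every colour $c\in[(1+\eps)\Delta]$, put $c$ into $S_v$ with probability $p$. Then $\E{|S_v|}=p(1+\eps)\Delta<\tfrac49\eps\Delta$; for an edge $uv$, $\E{|S_u\cap S_v|}=p^2(1+\eps)\Delta$, which exceeds $\tfrac{\eps^2}{9}\Delta$, twice the threshold in (A.2); and for a vertex $v$ and a colour $c$, $\E{|\{u\in N(v):c\in S_u\}|}=p\Delta=\tfrac{\eps}{3}\Delta$, a factor $\tfrac23$ below the threshold in (A.3). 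In each case the quantity of interest is a sum of independent indicators whose mean is $\Theta(\eps\Delta)$ or $\Theta(\eps^2\Delta)$ and differs from the target by a fixed multiplicative constant (this is where $\eps\le\eps_0$ is used), so standard Chernoff bounds show that each property fails at a given vertex or edge with probability at most $\expo^{-\Omega(\eps^2\Delta)}$.

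It then remains to apply the symmetric local lemma. Define the ``bad'' events $A_v=\{|S_v|>\tfrac49\eps\Delta\}$ for $v\in V(G)$, $B_{uv}=\{|S_u\cap S_v|<\tfrac{\eps^2}{18}\Delta\}$ for $uv\in E(G)$, and $C_{v,c}=\{|\{u\in N(v):c\in S_u\}|>\tfrac{\eps}{2}\Delta\}$ for $v\in V(G)$ and $c\in[(1+\eps)\Delta]$. Each such event is determined by the independent choices made at a set of at most $\Delta+1$ vertices ($\{v\}$ for $A_v$, $\{u,v\}$ for $B_{uv}$, $N(v)$ for $C_{v,c}$). Since $G$ is $\Delta$-regular by Proposition~\ref{prop:regular} and there are $(1+\eps)\Delta$ colours, each vertex lies in the defining set of at most $1+\Delta+(1+\eps)\Delta+\Delta(1+\eps)\Delta=O(\Delta^2)$ of these events, so each event is mutually independent of all but $d=O(\Delta^3)$ of the others. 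Because $\expo\cdot\expo^{-\Omega(\eps^2\Delta)}\cdot(d+1)\to 0$ as $\Delta\to\infty$, the symmetric Lov\'asz Local Lemma applies once $\Delta\ge\Delta_0(\eps)$ and yields an outcome avoiding every bad event; the resulting sets $S_v$ satisfy (A.1)--(A.3).

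I do not foresee a genuine difficulty here. The two points needing a little care are checking that the numerical constants $\tfrac49$, $\tfrac1{18}$, $\tfrac12$ really leave a multiplicative gap to the corresponding means --- which is what pins down the admissible range of $p$ and forces the use of $\eps\le\eps_0$ --- and confirming that the polynomial dependency degree $d=O(\Delta^3)$ is negligible against the exponentially small failure probabilities, which is the origin of the hypothesis $\Delta\ge\Delta_0(\eps)$. If one preferred to avoid the event $A_v$ entirely, one could instead sample each $S_v$ uniformly among the subsets of $[(1+\eps)\Delta]$ of size exactly $\lfloor\tfrac{\eps}{3}\Delta\rfloor$ and invoke a concentration inequality for sampling without replacement; the independent-inclusion model above seems the cleanest.
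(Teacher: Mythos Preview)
Your proposal is correct and follows essentially the same route as the paper: random inclusion of each colour in each $S_v$, Chernoff bounds for the three binomial counts, and the Lov\'asz Local Lemma to avoid all bad events simultaneously. The only cosmetic differences are that the paper takes $p=(1+\eps)^{-1/2}\eps/3$ rather than $\eps/3$ (either works given $\eps\le\eps_0$) and counts dependencies a bit more tightly to get $O(\Delta^2)$ instead of your $O(\Delta^3)$, which is immaterial against the $\expo^{-\Omega(\eps^2\Delta)}$ failure probabilities.
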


The second lemma is the core of the proof of Theorem~\ref{thm:main} and shows that, given a certain collection of sets of reserved colours on the vertices of $G$ (for instance, the one provided by Lemma~\ref{lem:pre_reservation}), there exists a partial edge colouring that satisfies some desirable properties.
\begin{lemblank}
    \label{lem:pre_colouring}
Let $\{S_v:\,v\in V(G)\}$ be a collection of subsets of $[(1+\eps)\Delta]$ that satisfies properties (A.1)--(A.3) from Lemma~\ref{lem:pre_reservation}.
Then, there exists a proper partial acyclic edge colouring of $G$ such that
\begin{enumerate}[({B}.1)]
\item every coloured edge $e=uv$ has a colour from $[(1+\eps)\Delta]\setminus (S_u\cup S_v)$;
\item for every vertex $v$ and every colour $c\in S_v$ there are at most
    $\left( \frac{\varepsilon^2}{18} \right)^2 \frac{\Delta}{128}$ vertices $u \in N(v)$ satisfying that $uv$ is
        uncoloured and
    $c \in S_u$, and
\item every cycle $C$ that can become completely bicoloured by using on the uncoloured edges $e=uv$ of $C$
a colour in \(S_u \cap S_v\),
has at least $3$ uncoloured edges.
\end{enumerate}
\end{lemblank}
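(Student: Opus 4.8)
The plan is to prove Lemma~\ref{lem:pre_colouring} with a semi-random (``nibble'') iterative colouring procedure, following the scheme Kahn used for asymptotically optimal list edge colouring (Chapter~14 of~\cite{Molloy2002graph}; see also~\cite{kahn1996asymptotically,molloy2000near}), but carrying along, through the rounds, the information of which cycles can still become bicoloured. We run the procedure for $N=N(\eps)$ rounds, where $N=O(\log(1/\eps))$ is chosen so that the conflict count appearing in (B.2) has decayed to its target; crucially $N$ depends only on $\eps$, and we will take the girth bound $g=g(\eps)$ much larger than $N$. Let $\phi_0$ be the empty colouring and $\phi_i$ the (proper, partial) colouring after round $i$. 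Throughout, every tentative colour of an uncoloured edge $e=uv$ is drawn from a palette contained in $[(1+\eps)\Delta]\setminus(S_u\cup S_v)$, so (B.1) holds automatically at every stage. For the analysis we maintain, for each uncoloured edge $e=uv$, a palette $L_e^{(i)}\subseteq[(1+\eps)\Delta]\setminus(S_u\cup S_v)$ of colours not yet used at $u$ or $v$; for each vertex $v$ the uncoloured degree $d_i(v)$; and for each $v$ and each $c\in S_v$ the quantity $T_i(v,c)=\bigl|\{u\in N(v):\ uv\text{ uncoloured in }\phi_i,\ c\in S_u\}\bigr|$ from (B.2). By (A.1), $|L_e^{(0)}|\ge(1+\eps)\Delta-|S_u\cup S_v|\ge(1+\tfrac{\eps}{9})\Delta$, while $d_0(v)=\Delta$ and, by (A.3), $T_0(v,c)\le\tfrac{\eps\Delta}{2}$; the $\Omega(\eps)\Delta$ gap between palette size and uncoloured degree is exactly the slack that the nibble needs.

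In round $i$ we perform the standard semi-random step: independently assign each uncoloured edge $e$ a tentative colour uniform in $L_e^{(i-1)}$, then \emph{reject} (un-assign) an edge if it receives the same tentative colour as an adjacent edge, or if keeping its colour would violate the cycle invariant below; surviving tentative colours become permanent, and the palettes, degrees and $T$-counts are updated. As in~\cite{Molloy2002graph}, the probability that a given colour survives in a given palette can be estimated, and Talagrand/Azuma-type concentration together with one application of the Lov\'asz Local Lemma to the random outcome of the round lets us maintain, for suitable parameters $\eps_i>0$ that remain $\Omega(\eps)$ for all $i\le N$: that each palette $L_e^{(i)}$ exceeds by a factor $1+\eps_i$ the relevant uncoloured-degree quantities at its endpoints; that $d_i(v)\le(1-1/e+o(1))^i\Delta$; and that $T_i(v,c)\le(1-1/e+o(1))^iT_0(v,c)$. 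Choosing $N=N(\eps)$ large enough then forces $T_N(v,c)\le\bigl(\tfrac{\eps^2}{18}\bigr)^2\tfrac{\Delta}{128}$, which is precisely (B.2).

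The new ingredient is the treatment of bicoloured cycles, and in particular of (B.3). Call a cycle $C$ \emph{$(a,b)$-live} under a partial colouring if its coloured edges form an alternating $\{a,b\}$-pattern and, for every uncoloured edge $uv$ of $C$, the colour forced on $uv$ by that pattern lies in $S_u\cap S_v$; a cycle is \emph{live} if it is $(a,b)$-live for some pair $\{a,b\}$. (Since coloured edges avoid the reserved sets of their endpoints, a bicoloured cycle is live only if all its edges are coloured, which the procedure will forbid.) We maintain the invariant that under $\phi_i$ every live cycle has at least $3$ uncoloured edges; at $i=N$ this is exactly (B.3), and for $i<N$ it contains, in particular, the statement that $\phi_i$ is acyclic. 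It is enforced by the extra rejection rule: reject a tentatively coloured edge $e$ whenever permanently colouring it would leave some live cycle through $e$ with at most $2$ uncoloured edges. To keep this rule inside the per-round Local Lemma argument we must show it costs each palette only an $o(1)$-fraction of its colours with probability far below the reciprocal of its local dependency. This is where the girth hypothesis is used: a live cycle through $e$ with at most $2$ uncoloured edges has length at least $g$, hence at least $g-2$ edges already coloured, and those edges are \emph{rigidly} two-coloured in a forced alternating pattern; bounding the number of such configurations rooted at $e$ and the probability that rounds $1,\dots,i-1$ produced such a long rigid alternating coloured path, and choosing $g(\eps)=\eps^{-O(1)}$ accordingly, makes the contribution of the rule negligible. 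Combining the rejection rules into a single Local Lemma application per round and iterating $N$ times yields the required partial colouring satisfying (B.1)--(B.3).

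The main obstacle, I expect, is exactly this last reconciliation: running Kahn's nibble so that the palette-slack and geometric-decay invariants survive \emph{while} also never letting a live cycle drop below $3$ uncoloured edges, i.e.\ bounding the colours lost to the cycle-rejection rule without spoiling the decay of $d_i(v)$ and $T_i(v,c)$. This forces a careful enumeration of the ``almost bicoloured'' structures through a fixed edge and a union/Local-Lemma estimate showing they are improbable once $g\ge g(\eps)$, which is the source both of the dependence of $g$ on $\eps$ and of the explicit bound $g(\eps)=\eps^{-O(1)}$ noted after Theorem~\ref{thm:main}.
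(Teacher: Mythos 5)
Your proposal follows the paper's overall architecture very closely: you run Kahn's nibble for $N=N(\eps)$ rounds, maintaining palettes drawn from $[(1+\eps)\Delta]\setminus(S_u\cup S_v)$ so that (B.1) is automatic, and using (A.1)/(A.3) as the starting slack that makes the nibble go through; you then stop once the conflict count in (B.2) has decayed below its target, with $N$ depending only on $\eps$. All of this matches the paper (Sections~\ref{sec:outline}--\ref{sec:main_ite}). The genuine divergence is in how (B.3) is handled.

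The paper does \emph{not} add a cycle-based rejection rule to the nibble. It instead tracks, for a carefully chosen monotone class of \emph{significant} cycles (those of bounded $\{c,d\}$-multiplicity and with a bounded number of reserved edges), the number $\lambda_i^{\{c,d\}}(C)$ of \emph{free} edges, proves via the Local Lemma that $\lambda_i^{\{c,d\}}(C)\ge\Lambda_i$ survives each round where $\Lambda_i\approx 2k/2^{i-1}$, and argues at the end that any cycle failing (B.3) would either be significant with $\ge\Lambda_{i^*+1}\ge 3$ free edges, or fail significance only by having $>4$ reserved (hence uncoloured) edges. The girth enters as the starting value $\Lambda_1\approx g$, and the bounded number of rounds $i^*=O(\log(1/\eps))$ caps the geometric decay. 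Your proposal, by contrast, tries to \emph{enforce} the invariant ``every live cycle has $\ge3$ uncoloured edges'' deterministically via a per-round rejection rule, then charges the cost of the rule to the Local Lemma. This is a different mechanism, and it leaves several real gaps. First, the rule ``reject a tentatively coloured $e$ whenever permanently colouring it would leave some live cycle through $e$ with at most 2 uncoloured edges'' is not well-posed as stated: in one nibble round several uncoloured edges of the same cycle may be tentatively coloured, so whether $e$'s permanence violates the invariant depends on which of the \emph{other} tentative colours are also kept, which is circular (and if instead you evaluate $e$ against all tentative colours before any rejection, you must then quantify the over-rejection). Second, your class of ``live'' cycles is not obviously monotone under the procedure: a cycle with $0$ or $1$ coloured edges that is not $(a,b)$-live can become $(c,d')$-live once an edge gets a nibble colour $c$ (the condition on that edge's forced colour being in $S_u\cap S_v$ is dropped when the edge becomes coloured), so the set of cycles you have to police can grow, and the inductive hypothesis does not automatically control the newcomers. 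Third, the girth argument you sketch (``$\ge g-2$ rigidly alternating coloured edges is improbable'') is a fresh and delicate enumeration-and-union-bound that you wave at; the paper's counting (Lemma~\ref{lem:cycle:incremental}), which builds a rooted tree $H$ and crucially relies on the $\Psi_i$-bounded multiplicity and reserved-edge counts plus the inductive lower bound $\lambda_i(C)\ge\Lambda_i$ to show $H$ is acyclic, is precisely what makes that enumeration tractable; without the significance/multiplicity machinery it is not clear your count of ``configurations rooted at $e$'' closes.

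In short: same nibble skeleton, same use of girth and of a round count depending only on $\eps$, but a genuinely different treatment of (B.3) --- a deterministic rejection rule in your proposal vs.\ the paper's ``track the free-edge count of significant cycles through the Local Lemma, no rejection.'' The paper's route sidesteps the well-posedness and monotonicity issues above and supplies the cycle enumeration you need, so as written your proposal has real gaps at exactly the place the paper spends its effort. (A minor point: the per-round decay factors should be $(1-e^{-2})^2$ for $\ell_i$ and $t_i$ and $(1-e^{-2})$ for $r_i$, not $1-1/e$; this does not affect the structure but your stated decay rate is off.)
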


The collection of sets provided by Lemma~\ref{lem:pre_reservation} can be used in
Lemma~\ref{lem:pre_colouring} to obtain a partial  acyclic edge colouring satisfying some nice
properties. The following lemma shows that, using these properties, we can complete the partial acyclic edge
colouring provided by Lemma~\ref{lem:pre_colouring}.
\begin{lemblank}
    \label{lem:pre_finishing}
    Let \(\gamma \in (0,1)\) be a given constant. Let \(\chi\) be a partial acyclic edge colouring of \(G\) that satisfies the following properties,
    \begin{itemize}
    \item[-] for every uncoloured edge \(e\) there is a list of colours of size exactly \(\gamma \Delta\) that are not used by $\chi$ in the coloured edges adjacent to $e$,
    \item[-] for every uncoloured edge \(e\), every colour in its list appears in the list of at most \(\gamma^2 \Delta/128\) uncoloured edges adjacent to \(e\),
    \item[-] there are at most \((1+\varepsilon)\Delta\) colours in the union of all the lists, and
    \item[-] every cycle that can be completely bicoloured in an extension of \(\chi\), has at least \(3\) uncoloured edges.
    \end{itemize}
    Then, \(\chi\) can be extended to an acyclic edge colouring of $G$.
\end{lemblank}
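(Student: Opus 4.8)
The plan is to extend $\chi$ in one shot by the Lov\'asz Local Lemma, in the spirit of the finishing blow for list edge colouring. Colour each uncoloured edge $e$ independently with a colour $\chi'(e)$ drawn uniformly from its list. Since the lists avoid the colours of the adjacent coloured edges, $\chi\cup\chi'$ is automatically proper against $\chi$, so the only possible failures are (i) two adjacent uncoloured edges receiving equal colours and (ii) a cycle becoming bicoloured; by the fourth hypothesis we may restrict (ii) to the \emph{critical} cycles, those that can become completely bicoloured, each of which carries at least three uncoloured edges. Accordingly I would introduce a bad event $A_{ef}$ (``$\chi'(e)=\chi'(f)$'') for each pair of adjacent uncoloured edges $e,f$, and a bad event $B_C$ (``$C$ becomes bicoloured'') for each critical cycle $C$; if no bad event occurs, then $\chi\cup\chi'$ is an acyclic edge colouring of $G$.

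The elementary estimates are $\p{A_{ef}}\le 1/(\gamma\Delta)$, since the colours are uniform on lists of size $\gamma\Delta$. For a critical cycle $C$ with $t\ge 3$ uncoloured edges and length $\ell\ge g$: if the already-coloured part of $C$ exhibits two distinct colours then the bicolouring pair $\{a,b\}$ is forced and every uncoloured edge of $C$ must receive a prescribed colour, so $\p{B_C}\le(\gamma\Delta)^{-t}$; if it exhibits at most one colour then an entire colour class of $C$, of size $\ell/2\ge g/2$, is uncoloured and must become monochromatic, which costs at most $\gamma\Delta\cdot(\gamma\Delta)^{-\ell/2}$; in all cases $\p{B_C}$ is small and decreases as $t$ or $\ell$ grows.

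The heart of the matter is the dependency analysis, and I expect it to be the main obstacle. Two bad events interact only if they share an uncoloured edge, so the quantities to control are, for a fixed uncoloured edge $e$, the number of pair events through $e$ (at most $2\Delta$, trivially) and $\sum_{C\ni e}\p{B_C}$ over critical cycles. A crude cycle count is hopeless, since a critical cycle can be long while using only a handful of uncoloured edges, so I would count structurally, driven by two facts. First, for a fixed colour pair $\{a,b\}$ the edges already coloured $a$ or $b$ form a vertex-disjoint union of paths ($\chi$ being proper and acyclic), and a maximal coloured stretch of $C$ must be the \emph{whole} of one such path --- it cannot be entered or left at an interior vertex, which already carries both $a$ and $b$ --- so each coloured stretch is forced by the uncoloured edge preceding it, contributing no branching. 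Second, and this is where list-sparsity enters, when $C$ is built edge by edge the next uncoloured edge at the current vertex must carry a \emph{prescribed} colour $c\in\{a,b\}$ in its list, and the number of uncoloured edges at a vertex whose list contains a fixed colour $c$ is at most $\gamma^2\Delta/128+1$ --- either there is at most one, or, picking one and applying list-sparsity to it, at most $\gamma^2\Delta/128+1$; weighed against the $1/(\gamma\Delta)$ probability factor carried by that edge this yields a factor of order $\gamma/128$, bounded away from $1$. Together with a cheap accounting for the colour pair itself (recovered from the coloured edges, or from $L(e)$ and the list of one further edge), these collapse $\sum_{C\ni e}\p{B_C}$ essentially to a geometric series of the shape $\sum_{\ell\ge g}(\gamma/128)^{\ell}$, which the girth hypothesis starts from a large index and which choosing $g=g(\eps)$ large makes as small as we please.

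To conclude one applies the asymmetric Lov\'asz Local Lemma, assigning weights $x_A$ so that $\p{A}\le x_A\prod_{A'\sim A}(1-x_{A'})$ holds for every bad event: $x_{A_{ef}}$ of order $1/(\gamma\Delta)$, and $x_{B_C}$ a modest inflation of $\p{B_C}$ --- large enough to absorb the many low-probability pair events adjacent to a long cycle, yet small enough to keep $\sum_{C\ni e}x_{B_C}$ under control. Reconciling the two rather different probability scales present among the bad events, $1/(\gamma\Delta)$ versus $(\gamma\Delta)^{-t}$, is the genuinely delicate point, and it is exactly here that both the strength of the list-sparsity bound and the largeness of the girth are needed (one may also assume $\Delta$ large relative to $\gamma$, which holds in the regime of Theorem~\ref{thm:main}). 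Once the inequality is verified, with positive probability no bad event occurs; the corresponding extension of $\chi$ is then a proper edge colouring of $G$ with no bicoloured cycle, that is, an acyclic edge colouring, which completes the proof.
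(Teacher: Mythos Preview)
Your strategy is essentially the paper's: colour each uncoloured edge uniformly from its list, define bad events for monochromatic adjacent pairs and for bicoloured cycles, and apply a weighted/asymmetric Local Lemma. The cycle-counting idea (branching only at uncoloured edges, each contributing a factor $T/L=\gamma/128$ against its probability cost) is exactly right. Two points, however, need repair.

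\textbf{The pair events are miscounted.} With your definition $\p{A_{ef}}\le 1/(\gamma\Delta)$ and the crude count ``at most $2\Delta$ pair events through $e$'', the relevant LLL sum is $\sim 2/\gamma$, which is not small when $\gamma$ is small, and the Local Lemma fails. List sparsity must be used here as well. The paper fixes this by indexing pair events also by the common colour, $B_{e,f}^{c}$, so that $\p{B_{e,f}^{c}}\le L^{-2}$ and the number of such events touching $e$ is at most $2LT$ (for each $c\in L(e)$ there are at most $T$ choices of $f$), giving a contribution $O(T/L)=O(\gamma/128)$. Equivalently, in your formulation one must bound $\sum_{f\sim e}\p{A_{ef}}=\sum_{f\sim e}|L(e)\cap L(f)|/L^{2}=\sum_{c\in L(e)}\#\{f:c\in L(f)\}/L^{2}\le T/L$ rather than $2\Delta/L$.

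\textbf{Girth plays no role here.} Your geometric series should be indexed by the number $t\ge 3$ of uncoloured edges in the cycle, not by the cycle length $\ell\ge g$: the $\gamma/128$ factor accrues once per uncoloured edge, coloured stretches contribute no branching, and the hypothesis ``at least $3$ uncoloured edges'' is precisely what starts the series at $t=3$. Convergence comes from $\gamma/128<1$, not from large girth; this lemma is girth-free. (The factor in front of the series is $\Delta/L=1/\gamma$, coming from the choice of the second colour, which is why starting at $t=3$ rather than $t=2$ is essential.)
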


Lemma~\ref{lem:pre_reservation} and {Lemma}~\ref{lem:pre_finishing} are relatively straightforward
applications of the Local Lemma. Their proof is given in Section \ref{sec:reservation} and
\ref{sec:finish} respectively. We dedicate the rest of this section to give an outline
of the proof of Lemma~\ref{lem:pre_colouring}.

We prove the existence of the desired colouring via an iterative procedure with $i^*$ iterations,
where $i^*$ only depends on $\eps$ and will be defined later.
For every $1\leq i\leq i^*$, at the beginning of the $i$-th iteration, we will have a partial edge colouring of $G$. Moreover, for every uncoloured edge, we will also have a list of available
colours for it. Then, we will perform the following steps in order to obtain a new partial edge colouring:
\begin{itemize}
    \item[S.1:] Each list of colours is truncated to the same size by removing some
        colours in an arbitrary way.
    \item[S.2:] Each uncoloured edge is assigned a colour selected uniformly at random
        from its list.
    \item[S.3:] Adjacent edges assigned the same colour are uncoloured.
    \item[S.4:] For each edge with a newly assigned colour, an equalizing coin is flipped to
        decide if it gets uncoloured.
    \item[S.5:] Colours retained on an edge are removed from the lists of the edges adjacent to it.
    \item[S.6:] For every colour and vertex, an equalizing coin is flipped to decide if the colour is
        removed from all the lists of the edges incident to the vertex.
\end{itemize}

After performing these steps, we will show that there exist constants $L_{i+1}, T_{i+1}, R_{i+1}$ and $\Lambda_{i+1}$ (to be
set later) such that the partial edge colouring that we have obtained, satisfies the following
properties:
\begin{itemize}
    \item[--] for every uncoloured edge, there are at least $L_{i+1}$  choices of colours {in its corresponding list},
    \item[--] for every vertex $v$ and every colour $c$, there are at most $T_{i+1}$
         edges $e=uv$ satisfying that $e$ is uncoloured
         and \(c\) is in the list of \(e\),
    \item[--] for every vertex $v$ and every colour $c$, there are at most $R_{i+1}$
         vertices $u \in N(v)$ satisfying that $uv$ is uncoloured
        and $c\in S_u$, and
    \item[--] every cycle that is partially bicoloured and that is
            \textit{significant} (to be defined later) has at least $\Lambda_{i+1}$ uncoloured \textit{free} (to be defined later) edges.
\end{itemize}

Now, we define the quantities related to the partial edge colouring that we need to control throughout the iterative colouring.
For every edge $e=uv$, let $L_1(e) := [(1+\eps)\Delta] \setminus (S_u \cup S_v)$, be the initial
list of available colours.

For every uncoloured edge $e$, let $L_{i}(e)$ be the list of available colours in $e$ {\it before} the $i$-th iteration.
(For the sake of clarity and with a slight abuse of notation, we also use \(L_{i}(e)\) to refer to the list of available colours at edge $e$ after the truncation at step S.1.)
Let $\ell_{i}(e) := |L_{i}(e)|$.

For every vertex $v$ and every colour $c$, let $T_{i}(v,c)$ be the set {of edges $e$ incident to $v$} such that the following holds: {\it before} the $i$-th iteration $e$ is uncoloured and {$c\in L_i(e)$}.
Let $t_{i}(v,c) := |T_{i}(v,c)|$.

Finally, for every vertex $v$ and every colour $c$, let $R_{i}(v,c)$ be the set {of} neighbours $u$ of $v$ such that the following holds: {\it before} the $i$-th iteration {$uv$} is uncoloured {and $c\in S_u$}. Let $r_{i}(v,c) := |R_{i}(v,c)|$.

Observe that $\ell_{i}(e)$, $t_{i}(v,c)$ and $r_{i}(v,c)$ control the first three quantities which we
need to bound at every iteration. We define $L_{i}$, $T_{i}$ and $R_i$ recursively as follows.  Let
$ L_1 := \left(  1+\eps/9 \right) \Delta$, $T_{1} := \Delta$ and $R_{1} := \eps \Delta/2$.  Then, for every $i\geq 1$
\begin{align}\label{eq:definition}
    L_{i+1} &:= (1-\expo^{-2})^{2} \,L_{i} - L^{2/3}_{i}, \nonumber\\
    T_{i+1} &:= (1-\expo^{-2})^{2} \,T_{i} + T^{2/3}_{i}, \\
    R_{i+1} &:= (1-\expo^{-2}) R_{i} + R^{2/3}_{i}.\nonumber
\end{align}


Let us now focus on the last property that the partial edge colouring must satisfy at the end of the $i$-th iteration.
First of all, we give some definitions.

At the beginning of the \(i\)-th iteration, \(G\) has a partial edge colouring and each
uncoloured edge $e=uv$ has two lists of colours: $L_i(e)$ (the colours that can be used for $e$ during the \(i\)-th iteration) and $S_u\cap S_v$ (the colours that have been reserved for the end).

For every colour \(c\) and every uncoloured edge $e=uv$, we say that $e$ is $c$-\textit{reserved} if $c\in S_u\cap S_v$ and that $e$ is $c$-\textit{free} if $c\in L_i(e)$. Given a pair of colours \(\{c,d\}\), we also use the term $\{c,d\}$-reserved to refer to edges that are either $c$-reserved, $d$-reserved, or both (and similarly for the free edges). By construction of $L_i(e)$, an edge cannot be both reserved and free.
We say that an edge $e=uv$ is \(c\)-\textit{compatible} if it has colour $c$ or it is uncoloured and either $c$-reserved or $c$-free.  We say that a cycle \(C\) is \(\{c,d\}\)-\textit{compatible} if every edge
in \(C\) is either \(c\)-compatible, \(d\)-compatible, or both.

Given a path \(v_1, \ldots, v_{\ell}\) and an ordered pair of colours $(c,d)$ we call the path
$(c,d)$-alternating if, for every $j\geq 1$, \(e = v_{2j-1} v_{2j}\) is a \(c\)-compatible edge and
\(e = v_{2j} v_{2j+1}\) is a $d$-compatible edge.
For a $\{c,d\}$-compatible cycle $C$, we define its $\{c,d\}$-multiplicity as the smallest $t$ such that
$C$ can be partitioned into $t$ paths that are either $(c,d)$-alternating or $(d,c)$-alternating.  We define the $\{c,d\}$-multiplicity of
a path in the same way.

%

In order to control the multiplicity and the number of reserved edges in a cycle, we define
$$
\Psi_{i} := 4^{2+i^{*}-i}\;.
$$
We say that a cycle \(C\) is \( \{c,d\}\)-\textit{significant} at
the \(i\)-th iteration, if at the beginning of
the $i$-th iteration, \(C\) is \( \{c,d\}\)-compatible, has \(\{c,d\}\)-multiplicity at most \(\Psi_{i}\)  and contains at most \(\Psi_{i}\)  \(
\{c,d\}\)-reserved edges. (When the pair of colours $\{c,d\}$ is clear from the context, we simply refer to compatible and significant cycles, to multiplicity, and to reserved and free edges.)

Note that since \(\Psi_i\) is decreasing in \(i\), the definition of significant cycles becomes stronger throughout the iterative procedure.
Thus, the number of significant cycles that need to be controlled is considerably reduced at each iteration, which is a crucial point for our analysis.


Let $k:=\lfloor g/2 \rfloor$, where $g$ is the girth of $G$. For every pair of colours \( \{c,d\}\) and every cycle $C$ that is \( \{c,d\}\)-significant at the \(i\)-th
iteration, let $\lambda_{i}^{\{c,d\}}(C)$ be its number of \( \{c,d\}\)-free
edges at the beginning of the $i$-th iteration. In order to lower bound $\lambda_{i}^{\{c,d\}}(C)$, we define
$$
\Lambda_i := \frac{2k}{2^{i-1}} - 4 \ctn i\;.
$$
With this definition, we have $\Lambda_{i+1}=2\Lambda_i + 4\Psi_1$.

Now, we are able to precisely state the properties of the partially edge-coloured graph that our
colouring procedure must satisfy at every iteration.
For every $0 \leq i \leq i^*$, we define:
\begin{property} {With the definitions given above, the following is satisfied:}
\begin{enumerate}[(P.1)]
    \item $\ell_{i+1}(e) \ge L_{i+1},$ for every uncoloured edge $e$,
    \item $t_{i+1}(v,c) \le T_{i+1},$ for every vertex $v$ and colour $c$,
    \item $r_{i+1}(v,c) \le R_{i+1},$ for every vertex $v$ and colour $c$, and
    \item {$\lambda^{\{c,d\}}_{i+1}(C) \ge \Lambda_{i+1}$, for every pair of colours \( \{c,d\}\) and
        every cycle $C$ that is \( \{c,d\}\)-significant at the \( (i+1)\)-st iteration.}
\end{enumerate}
\end{property}

\noindent Let us now define the total number of iterations $i^{*}=i^{*}(\eps)$ as the smallest integer $i$ such that
\begin{align}\label{eq:cond_on_i*}
   R_{i+1}< \left(\frac{\varepsilon^2}{18}\right)^2 \frac{\Delta}{128} \;.
\end{align}
We stress that if $\Delta$ is large enough, then $i^*$ only depends on $\eps$, since $R_1=\eps\Delta/2$ and $R_{i+1}= (1-\expo^{-2})
R_{i} + R^{2/3}_{i}$. In particular, we have $i^*\leq c \log_2{(1/\eps)}$, for some constant
$c>0$.

\begin{lem}\label{lem:finalP}
    If property $P(i^*)$ is satisfied, then there exists a proper partial acyclic edge colouring of $G$ that satisfies the properties (B.1)--(B.3) of Lemma~\ref{lem:pre_colouring}.
\end{lem}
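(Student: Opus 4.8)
The statement assumes $P(i^*)$ holds and asks us to produce a partial acyclic edge colouring satisfying (B.1)--(B.3). So the colouring itself is the one built at the end of the $i^*$-th iteration of the procedure from the outline; the work is purely to \emph{translate} the four quantitative properties (P.1)--(P.4), together with the definitions of $L_i$, $T_i$, $R_i$, $\Lambda_i$, $\Psi_i$, into the three properties (B.1)--(B.3). First I would fix the partial colouring $\chi$ to be the one produced after iteration $i^*$ and check it is proper and partial acyclic: properness and the absence of \emph{completely} bicoloured cycles are maintained at every step of the procedure (steps S.3 uncolour monochromatic adjacent edges, and the alternating/significant-cycle bookkeeping never allows a cycle to close up bicoloured), so this is immediate from the construction.

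\textbf{Deriving (B.1).} Property (B.1) says every coloured edge $e=uv$ uses a colour outside $S_u\cup S_v$. This is true from the start: $L_1(e) = [(1+\eps)\Delta]\setminus(S_u\cup S_v)$ by definition, step S.1 only shrinks lists, and an edge only ever receives a colour from its current list (step S.2); colours are never added back to any list. Hence every edge that is coloured at the end of iteration $i^*$ got its colour from some $L_i(e) \subseteq L_1(e)$, which avoids $S_u\cup S_v$. No arithmetic is needed here.

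\textbf{Deriving (B.2).} Property (B.2) bounds, for each vertex $v$ and colour $c\in S_v$, the number of neighbours $u$ with $uv$ uncoloured and $c\in S_u$; this is exactly $r_{i^*+1}(v,c)$. By (P.3) we have $r_{i^*+1}(v,c)\le R_{i^*+1}$, and the total number of iterations $i^*$ was \emph{defined} in \eqref{eq:cond_on_i*} to be the smallest $i$ with $R_{i+1} < (\eps^2/18)^2\,\Delta/128$. So $R_{i^*+1} < (\eps^2/18)^2\,\Delta/128 = (\eps^2/18)^2 \Delta/128$, which is exactly the bound claimed in (B.2). (One should note $S_v$ is a fixed reservation, untouched by the procedure, so ``$c\in S_v$'' in (B.2) versus ``for every colour $c$'' in (P.3) causes no mismatch — (P.3) is simply the stronger statement.)

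\textbf{Deriving (B.3), the main point.} Property (B.3) says: every cycle $C$ that can be made \emph{completely} bicoloured by assigning to each uncoloured edge $e=uv$ of $C$ a colour of $S_u\cap S_v$ has at least $3$ uncoloured edges. Such a $C$ is, in the terminology of the outline, a $\{c,d\}$-compatible cycle for some pair $\{c,d\}$ in which every uncoloured edge is $\{c,d\}$-reserved — in particular it has $\{c,d\}$-multiplicity $1 \le \Psi_{i^*}$ and at most (number of edges of $C$) reserved edges. To invoke (P.4) I need $C$ to be $\{c,d\}$-significant at iteration $i^*$, i.e. also to have at most $\Psi_{i^*}$ reserved edges. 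This should follow because the girth is $g \ge 2k$ (as $k=\lfloor g/2\rfloor$), $\Psi_{i^*} = 4^2 = 16$, and $g(\eps)$ is chosen large enough in terms of $\eps$: any cycle all of whose uncoloured edges are reserved but which has more than $\Psi_{i^*}$ reserved edges would either already have few uncoloured (done) or be long enough that the recursion for $\Lambda$ gives a contradiction. More directly: if $C$ has at most $2$ uncoloured edges, but $C$ is $\{c,d\}$-significant with all uncoloured edges reserved, then $\lambda^{\{c,d\}}_{i^*+1}(C)$, the number of $\{c,d\}$-free edges, is $0$ (reserved and free are mutually exclusive by construction of $L_i(e)$, and at most $2$ uncoloured edges all reserved means no free edges on $C$), contradicting (P.4) which gives $\lambda^{\{c,d\}}_{i^*+1}(C)\ge \Lambda_{i^*+1}$ — and one checks $\Lambda_{i^*+1} = \tfrac{2k}{2^{i^*}} - 4\Psi_1 i^*$ is strictly positive for $g=g(\eps)$ large enough (since $i^* \le c\log_2(1/\eps)$ and $\Psi_1 = 4^{2+i^*}$ both depend only on $\eps$, it suffices that $k = \lfloor g/2\rfloor$ exceed $2^{i^*+1}\Psi_1 i^*$, which is an $\eps^{-O(1)}$ bound matching the claimed $g(\eps)$). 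The only thing to be careful about is the significance hypothesis of (P.4): I must argue such a $C$ is genuinely significant, i.e. satisfies both the multiplicity bound (which is clear, multiplicity $1$) and the bound on reserved edges. If $C$ has more than $\Psi_{i^*}$ reserved edges then it is not significant and (P.4) says nothing; but in that case $C$ has more than $\Psi_{i^*} = 16 \ge 3$ uncoloured (reserved) edges, so (B.3) holds trivially. Thus in every case either $C$ has $\ge 3$ uncoloured edges outright, or it is significant and (P.4) forbids $\le 2$ uncoloured edges.

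\textbf{Main obstacle.} The only genuinely delicate point is the last one: reconciling the hypothesis of (B.3) (``can be completely bicoloured using $S_u\cap S_v$ on uncoloured edges'') with the definition of a \emph{significant} cycle, because significance imposes a cap $\Psi_{i^*}$ on the number of reserved edges while (B.3) allows cycles with arbitrarily many reserved edges. The resolution is the case split above — large reserved-edge count automatically gives many uncoloured edges — but it must be spelled out cleanly, and one must confirm that a cycle with $\le 2$ uncoloured edges, all reserved, indeed has $\{c,d\}$-multiplicity $1$ and thus is significant so that (P.4) applies and yields the contradiction via $\Lambda_{i^*+1}>2$. Everything else is a direct unwinding of definitions.
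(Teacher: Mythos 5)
Your proof is correct and takes essentially the same approach as the paper's: (B.1) unwinds from $L_1(e)=[(1+\eps)\Delta]\setminus(S_u\cup S_v)$ and the fact that lists only shrink, (B.2) from the definition of the stopping time $i^*$, and (B.3) from $\Lambda_{i^*+1}\ge 3$ plus a case split on whether $C$ is significant at the $(i^*+1)$-st iteration. Two small slips that do not damage the argument: (P.4) in $P(i^*)$ concerns cycles significant at the \emph{$(i^*+1)$-st} iteration, so the relevant threshold is $\Psi_{i^*+1}=4$ rather than $\Psi_{i^*}=16$ (a cycle with $\le 2$ uncoloured, all-reserved edges still has $\le 2\le\Psi_{i^*+1}$ reserved edges and multiplicity one, hence is significant); and the claim $\lambda^{\{c,d\}}_{i^*+1}(C)=0$ overreaches slightly, since an uncoloured edge can be $c$-reserved yet $d$-free and hence $\{c,d\}$-free --- the correct observation is simply $\lambda^{\{c,d\}}_{i^*+1}(C)\le 2<3\le\Lambda_{i^*+1}$, which yields the same contradiction; the paper's own proof sidesteps both points by arguing directly rather than by contradiction.
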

\begin{proof}
First of all, since every edge $e$ that is coloured during the iterative procedure, is assigned a colour from $L_1(e)=[(1+\eps)\Delta]\setminus (S_u\cup S_v)$, property (B.1) is satisfied.
Moreover, by the definition of the stopping time $i^*$ in~\eqref{eq:cond_on_i*}, condition (B.2) is also satisfied.

Finally, we will show that $\lambda^{\{c,d\}}_{i^*+1}(C) \ge \Lambda_{i^*+1}=2k/2^{i^*}-4\Psi_1 (i^*+1)$ implies condition (B.3).
Since $i^*$ and $\Psi_1$ only depend on $\eps$, we can consider $g=g(\eps)$ (and thus, also $k$) large enough with respect to $\eps$ such that $\Lambda_{i^{*}+1}\geq {3}$. {This particularly implies that $k\approx 2^{i^*} = O\left(\eps^{-c}\right)$.}
Let $C$ be a partially edge-coloured cycle that can become completely bicoloured with colours $c$ and $d$, by using in each uncoloured edge $e=uv$, colours from $S_u\cap S_v$.
If $C$ is significant at the $(i^*+1)$-st iteration, then there are at least $\Lambda_{i^{*}+1}\geq {3}$ free (and in particular, uncoloured) edges.
Therefore assume that $C$ is not significant. Since $C$ can become bicoloured with colours $c$ and
$d$, it must be $\{c,d\}$-compatible and must have $\{c,d\}$-multiplicity $1$. Thus, the only way that $C$ fails to be significant is in the case that there are more than \(\Psi_{i^{*}+1} = 4^{2+i^{*}-(i^{*}+1)}=4\) reserved (and in particular, uncoloured) edges.
In conclusion, any such cycle has at least ${3}$ uncoloured edges and property (B.3) also holds.
\end{proof}

In order to prove that {$P(i^*)$} holds, we
first show that $P(0)$
holds by the hypothesis of Lemma~\ref{lem:pre_colouring}.
\begin{lem}\label{lem:P(0)}
Property $P(0)$ is satisfied deterministically.
\end{lem}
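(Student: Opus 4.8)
The plan is to unwind the definitions and check that each of (P.1)--(P.4) for $i=0$ follows directly from the hypotheses of Lemma~\ref{lem:pre_colouring}, that is, from properties (A.1)--(A.3) on the reserved sets $\{S_v\}$, together with the definitions $L_1=(1+\eps/9)\Delta$, $T_1=\Delta$, $R_1=\eps\Delta/2$, and $\Lambda_1=2k-4\Psi_1$. Recall that $P(0)$ concerns the quantities ``before the first iteration'', i.e.\ with the trivial empty colouring and the initial lists $L_1(e)=[(1+\eps)\Delta]\setminus(S_u\cup S_v)$.

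For (P.1): for every edge $e=uv$ we have $\ell_1(e)=|[(1+\eps)\Delta]\setminus(S_u\cup S_v)|\ge (1+\eps)\Delta-|S_u|-|S_v|\ge (1+\eps)\Delta-\frac{8\eps\Delta}{9}=(1+\eps/9)\Delta=L_1$, using (A.1) twice. For (P.2): before the first iteration no edge is coloured, so for every vertex $v$ and colour $c$ the set $T_1(v,c)$ is contained in the set of all edges incident to $v$, which has size exactly $\Delta$ since $G$ is $\Delta$-regular; hence $t_1(v,c)\le\Delta=T_1$. For (P.3): again no edge is coloured, so $R_1(v,c)=\{u\in N(v):c\in S_u\}$, and by (A.3) this has size at most $\eps\Delta/2=R_1$.

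For (P.4) we must check $\lambda_1^{\{c,d\}}(C)\ge\Lambda_1=2k-4\Psi_1$ for every cycle $C$ that is $\{c,d\}$-significant at the first iteration. Since $g$ is the girth, any cycle $C$ has length at least $g\ge 2k$, so it has at least $2k$ edges. Before the first iteration every uncoloured edge $e=uv$ with $c\in L_1(e)$ is $c$-free (and similarly for $d$), so an edge of $C$ fails to be $\{c,d\}$-free only if it is $\{c,d\}$-reserved (it cannot be coloured yet, and compatibility of $C$ forces every edge to be $c$- or $d$-compatible, hence reserved or free). By the definition of significance at iteration $1$, $C$ has at most $\Psi_1$ reserved edges. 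Therefore $\lambda_1^{\{c,d\}}(C)\ge 2k-\Psi_1\ge 2k-4\Psi_1=\Lambda_1$, as required. The only mild subtlety — and the one place to be careful — is making sure that ``$\{c,d\}$-compatible and not reserved implies free'' is used correctly: this is exactly the remark that by construction of $L_1(e)$ an edge cannot be simultaneously reserved and free, combined with the fact that at iteration $1$ there are no coloured edges, so compatibility leaves only the free or reserved options. With that observation all four properties hold deterministically, with no probabilistic input needed.
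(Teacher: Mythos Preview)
Your proof is correct and follows essentially the same approach as the paper: both verify (P.1)--(P.4) directly from (A.1), (A.3), the $\Delta$-regularity of $G$, and the definition of significance, obtaining in particular $\lambda_1^{\{c,d\}}(C)\ge 2k-\Psi_1 > 2k-4\Psi_1=\Lambda_1$. Your added remark that at iteration~1 every compatible edge is either free or reserved (since nothing is yet coloured) is a helpful clarification but not a new idea.
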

\begin{proof}
For every edge $e$, we have $\ell_1(e)\geq (1+\eps)\Delta-2\cdot \frac{4\eps \Delta}{9} = L_1$ by the property
(A.1). For every vertex $v$ and every colour $c$, we have $t_1(v,c)\leq \Delta=T_1$. For every
vertex $v$ and every colour $c$, we have $r_1(v,c)\leq \frac{\eps \Delta}{2} = R_1$ by the property (A.3). Finally,
since for every pair of colours $\{c,d\}$, every $\{c,d\}$-significant cycle $C$ contains at least
\(g\geq 2k\) uncoloured edges and at most \(\Psi_1\) reserved edges (by the definition of significant
cycles), \(C\) has at least \(2k-\Psi_1 > 2k- 4 \Psi_1 = \Lambda_1\) free edges.
\end{proof}

Thus, it suffices to show that for every $1\leq i\leq i^*$, if property $P(i-1)$ holds, then with positive probability $P(i)$ also holds.
In the remainder of this section, we provide an intuition on why this is true. {We will do it by showing that the expected decrease of the quantities $\ell_{i}(e)$, $t_{i}(v,c)$ and $r_{i}(v,c)$, is governed by the parameters $L_i$, $T_i$ and $R_i$, respectively.}
In Section~\ref{sec:main_ite} we present the details of it.

Observe that after the truncation at step S.1, we have $\ell_{i}(e) = L_i$ for every edge $e$. Assume that for every vertex $v$ and every colour $c$, we have $t_{i}(v,c) = T_i$ and $r_{i}(v,c) = R_i$. This assumption will help us to derive an intuition on how the previous parameters drop at each iteration. The equalizing coins will be used to correct the possible fluctuations of the previous parameters. Since we are assuming that these parameters have a given value, it also makes sense to assume that the equalizing coins are not flipped (that is, steps S.4 and S.6 are skipped in this intuitive analysis).

For every vertex $v$ and every colour $c$, let $\rem_i(v,c)$ be the probability that exactly one edge incident to $v$  retains
colour $c$ {assigned at step S.2}. Observe that, by step S.5 in our procedure, this is precisely the same as the probability that $c$ is removed from the lists of all the edges incident to $v$ because an edge incident to $v$ retains it. Since after step S.1 we have $\ell_i(e)=L_i$,
$$
{\rem_{i}(v,c)}
{=} \frac {T_i} {L_i} \left( 1 - \frac{1}{L_i} \right)^{T_i - 1} \left( 1 -
\frac{1}{L_i} \right)^{T_i-1}
= \frac {T_i} {L_i} \left( 1 - \frac{1}{L_i} \right)^{2 T_i-2} \approx
 \expo^{-2},
$$
{where the last approximation follows since $L_i$ and $T_i$ are very close at the beginning of the iterative colouring~(see definitions of $L_1$ and $T_1$) and they drop at the same speed, as it will be explained below. Since this probability does not depend on the choice of $v$ and $c$, we write $\rem_{i}:=\rem_{i}(v,c)$.}

For $e=uv$, we can compute the expected value of $\ell_{i+1}(e)$,
$$
\E{ \ell_{i+1}(e)}= {\E{\sum_{c\in L_i(e)} (1-\rem_{i}(u,c))(1-\rem_{i}(v,c))}} {=} L_{i} (1-\rem_i)^{2} \approx L_i (1-\expo^{-2})^{2} \approx L_{i+1},
$$
where the last approximation follows from \eqref{eq:definition}.

For every edge $e$ and every colour $c$, let $\kp_i(e,c)$ be the probability that every edge adjacent to $e$ is not assigned colour $c$ at step S.2. Then
$$
{\kp_i(e,c)} {=} \left( 1- \frac{1}{L_i} \right)^{2(T_i - 1)} \approx \expo^{-2}.
$$
Similarly as before, since the previous probability does not depend on the choice of $e$ and $c$, we write $\kp_{i}:=\kp_{i}(e,c)$. Observe that the probability that an edge $e$ retains its colour is exactly $\kp_{i}$: let $c$ be the colour assigned to $e$ at step S.2, then the probability it is retained at step S.3, is the probability $c$ is not assigned to any of the edges adjacent to $e$.

For every vertex $v$ and every colour $c$, there are two ways that an edge $e=vw\in T_i(v,c)$ satisfies $e\notin T_{i+1}(v,c)$: (1) either $c$ is removed from the list corresponding to $e$ at step S.5 because it was assigned to an edge incident to $w$~(which happens with probability $\rem_i$) or (2) $e$ retains the colour assigned at step S.2~(which happens with probability $\kp_i$). Observe that if $c$ is removed from the list corresponding to $e$ because it was assigned to an edge incident to $v$, then, for every edge $f$ incident to $v$ we have $c\notin L_{i+1}(f)$ and we can stop tracking $T_{i+1}(v,c)$.
Since the probability of (1) and (2) are essentially independent, it follows from the above estimations and by~\eqref{eq:definition} that,
$$
\E{t_{i+1}(v,c)} \approx T_{i} (1-\kp_i)(1-\rem_i) \approx T_{i}(1- \expo^{-2})^2 \approx T_{i+1}.
$$
Thus, as we stated before, roughly speaking, $\ell_{i}(e)$ and $t_{i}(v,c)$ drop at the same speed.

For every vertex $v$ and every colour $c$, the only way that a vertex $u\in R_i(v,c)$ satisfies $u\notin R_{i+1}(v,c)$ is that the edge $uv$ retains its assigned colour which occurs with probability $\kp_i$. Using~\eqref{eq:definition} again,
$$
\E{r_{i+1}(v,c)} {=} R_{i} (1-\kp_i) \approx R_{i}(1- \expo^{-2}) \approx R_{i+1}.
$$
It is worth noticing that, on the one hand, the expected drop of $r_i(v,c)$ is smaller than the one of $\ell_{i}(e)$, but, on the other hand, $r_1(v,c)\leq R_1=\eps\Delta/2$ is much smaller than $\ell_{1}(e)\geq L_1=(1+\eps/9)\Delta$.

The control of the number of free edges in significant cycles is more involved and will be done directly in Section~\ref{sec:main_ite}.




\section{Proof of Lemma~\ref{lem:pre_reservation}}

\label{sec:reservation}

{In this section we prove that there exists a collection of subsets of $[(1+\eps)\Delta]$ that satisfies some desirable properties.}
\begin{proof}[Proof of Lemma \ref{lem:pre_reservation}]
For every vertex $v\in V(G)$ we construct the set $S_v$ by selecting each
colour $c$ independently with probability $(1+\eps)^{-1/2}\eps/3.$
{We show that with positive probability this collection of sets fulfils the conditions (A.1)--(A.3) in Lemma
\ref{lem:pre_reservation}.}

Recall that for every edge $e=uv$, $S_e=S_v\cap S_u$.
Let $S_{v,c}:=\{u\in N(v):\; c\in S_u\}$, be the set of neighbours of $v$ that reserve colour $c$. Note that $S_{v,c}= R_1(v,c)$, with the definition of $R_i(v,c)$ given in Section~\ref{sec:outline}.
For every vertex $v$, edge $e$ and colour $c$
$$
\E{|S_v|} = \frac{\eps(1+\eps)^{1/2}\Delta}{3}\;,   \quad  \E{|S_e|} = \frac{\eps^2\Delta}{9} \quad \mbox{ and } \E{|S_{v,c}|} = \frac{\eps\Delta}{3(1+\eps)^{1/2}} \;.
$$

For every vertex $v$ let $A_v$ be the event that $|S_v| >
\frac{4\eps\Delta}{9}$, for every edge $e$ let $B_e$ be the event that $|S_e| <
\frac{\eps^2\Delta}{18}$ and for every vertex $v$ and colour $c$ let $C_{v,c}$
be the event that $|S_{v,c}|> \frac{\eps\Delta}{2}$.

Observe that the random variables $|S_v|$, $|S_e|$ and $|S_{v,c}|$ are binomially distributed.
We use Chernoff's inequality~\cite[chap.~5]{Molloy2002graph}, which states that if $X$ is a binomial with $N$ trials and probability $p$, for any $\delta\in (0,1)$:
$$
\p{X\leq (1-\delta)\E{X}} <e^{-\frac{\delta^2}{2}\E{X}}\;,
$$
and
$$
\p{X\geq (1+\delta)\E{X}} <e^{-\frac{\delta^2}{3}\E{X}}\;.
$$

Recall that $\eps\leq \eps_0=10^{-3}$. Then, we have
\begin{align*}
\p{A_v}& \leq \p{|S_v|> (1+1/4)\E{|S_v|}}\leq \expo^{-(1/4)^2\,\E{|S_v|}/3} = \expo^{-\Omega(\Delta)} \\
\p{B_e}& =  \p{|S_e|< (1-1/2)\E{|S_e|}}\leq \expo^{-(1/2)^2\,\E{|S_e|}/2} = \expo^{-\Omega(\Delta)} \\
\p{C_{v,c}}& \leq  \p{|S_{v,c}| >  (1+1/2)\E{|S_{v,c}|}} \leq \expo^{-(1/2)^2\, \E{|S_{v,c}|}/3}= \expo^{-\Omega(\Delta)} \;.
\end{align*}

Observe that all the events $A_v$ are mutually independent. An event $A_v$ depends
on at most $\Delta$ events of type $B$, precisely the ones for which $e=uv$, and
on at most $(1+\eps)\Delta^2$ of type $C$, one for each vertex $u\in N(v)$ and
colour $c$. Analogously, an event $B_e$ depends on at most $2$ events of type
$A$, on at most $2(\Delta-1)$ {other} events of type $B$, and on at most
$2(1+\eps)\Delta^2$ events of type $C$. Finally each event $C_{v,c}$
depends on at most $\Delta$ events of type $A$, on at most $\Delta^2$ events of
type $B$ and on at most $\Delta^2$ {other} events of type $C$.

The Lov\'asz Local Lemma {(in the form of~\cite[p.~221]{Molloy2002graph})} directly shows that if $\Delta$ is large enough with
respect to $\eps$, with positive probability all the events do not hold at the
same time, thus proving the lemma.
\end{proof}

\section{Proof of Lemma~\ref{lem:pre_colouring}}\label{sec:main_ite}

{In this section we prove the lemma where most of the acyclic edge colouring is constructed.} First we compute the precise expectation of each
random variable involved in the iterative colouring procedure. Then, we show that these random variables are
concentrated. Finally we prove that with positive probability at the end of each iteration the graph has a partial edge colouring that satisfies the desired properties.

\subsection{The Expected Values}\label{sec:exp}

{Recall all the definitions given in Section~\ref{sec:outline}.} In this subsection we compute the expected values of $\ell_{i+1}(e)$,
$t_{i+1}(v,c)$ and $r_{i+1}(v,c)$, given that property $P(i-1)$ holds. For simplicity, let $\eta := (1-\expo^{-2})^2$ throughout this section.

The first lemma in this section shows that $L_i$, $T_i$, and $R_i$ do not become too small before
the {$i^{*}$-th} iteration.
\begin{lem}\label{lem:i:star}
    {There {exist positive} constants
    $\varepsilon_1$, $\varepsilon_2$, and $\varepsilon_3$ (only depending on $\eps$) such that for
    every $1\leq i \le i^{*}+1$},
    $$
    L_{i} \ge \varepsilon_1 \Delta, \qquad T_{i} \ge \varepsilon_2 \Delta,
    \qquad R_{i} \ge \varepsilon_3 \Delta.
    $$
\end{lem}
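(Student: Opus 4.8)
The plan is to prove Lemma~\ref{lem:i:star} by a straightforward induction, showing that the recursions in~\eqref{eq:definition} cannot drive $L_i$, $T_i$, $R_i$ below a constant fraction of $\Delta$ within the bounded number of steps $i^*$. The key structural fact I would use is that $i^* = i^*(\eps)$ is bounded by a constant depending only on $\eps$ (namely $i^* \le c\log_2(1/\eps)$, as noted after~\eqref{eq:cond_on_i*}), so we only ever iterate a bounded number of times, and each iteration multiplies the relevant quantity by roughly $\eta = (1-\expo^{-2})^2$ (or $(1-\expo^{-2})$ for $R$) and subtracts a lower-order term. I would reduce everything to a single elementary lemma about such recursions.

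First I would isolate the following claim: if $x_1 \ge \alpha\Delta$ for some constant $\alpha>0$, and $x_{j+1} \ge \beta x_j - x_j^{2/3}$ for a constant $\beta \in (0,1)$ as long as $x_j \ge \alpha'\Delta$, then $x_j \ge \tfrac{1}{2}\beta^{j-1}\alpha\Delta$ for all $j$ up to any fixed bound $m$, provided $\Delta$ is large enough with respect to $\alpha$, $\beta$, $m$. This follows by induction on $j$: writing $x_j \ge y\Delta$ with $y$ bounded below by a constant, the error term satisfies $x_j^{2/3} = o(x_j)$ since $x_j = \Omega(\Delta) \to \infty$; concretely $x_j^{2/3} \le \delta x_j$ once $x_j \ge \delta^{-3}$, so $x_{j+1} \ge (\beta-\delta)x_j$, and choosing $\delta$ small (depending on $\beta$ and $m$) keeps $x_{j+1}$ above the required fraction. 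Then I would apply this with $(\beta, \alpha) = (\eta, 1+\eps/9)$ for $L_i$, with $(\beta,\alpha) = (\eta, 1)$ for $T_i$, and with $(\beta,\alpha) = (1-\expo^{-2}, \eps/2)$ for $R_i$, each for $m = i^*+1$, to obtain
$$
L_i \ge \tfrac12 \eta^{i^*}(1+\eps/9)\Delta =: \eps_1\Delta,\qquad
T_i \ge \tfrac12 \eta^{i^*}\Delta =: \eps_2\Delta,\qquad
R_i \ge \tfrac12 (1-\expo^{-2})^{i^*}\tfrac{\eps}{2}\Delta =: \eps_3\Delta,
$$
and since $i^*$ depends only on $\eps$, so do $\eps_1,\eps_2,\eps_3$. (Strictly, one also needs $L_1, T_1, R_1$ to match their stated values, which they do by definition; note $L_1 = (1+\eps/9)\Delta$, not $(1+\eps)\Delta$, so I would be careful to use the correct initial constant.)

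I do not expect a genuine obstacle here — the lemma is, as the authors say, of the ``$L_i$, $T_i$, $R_i$ do not become too small'' bookkeeping type. The only subtlety worth flagging is that the recursion for, say, $R_{i+1}$ in~\eqref{eq:definition} is only the quantity we \emph{want} the random process to achieve, and $i^*$ is \emph{defined} as the first time $R_{i+1}$ drops below $(\eps^2/18)^2\Delta/128$; so for $i \le i^*$ the value $R_i$ is still $\Omega(\Delta)$ essentially by the definition of the stopping time, and the content of the lemma for $R$ is really just that the decrement $R_i^{2/3}$ is negligible so the recursion behaves like geometric decay. For $L_i$ and $T_i$ there is no stopping-time shortcut, so one genuinely needs the induction above, but since $\eta^{i^*}$ is a positive constant depending only on $\eps$ and $\Delta^{2/3} = o(\Delta)$, the bound goes through. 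The one thing I would double-check in writing it out is that the ``$\Delta$ large enough'' threshold depends only on $\eps$ (through $i^*$ and $\delta$), which is exactly what the statement allows via $\Delta \ge \Delta_0(\eps)$.
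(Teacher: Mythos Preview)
Your proposal is correct and essentially matches the paper's approach. The paper is slightly more direct, deriving $\eta^{i^*} \ge (\eps^3/(18^2\cdot 128))^2$ straight from the stopping-time definition of $i^*$ (via $R_{i^*+1} \ge \eta^{i^*/2}R_1$) and then writing $L_i \ge \eta^{i^*}L_1 - i^*L_1^{2/3}$ in one line; note also that $T_{i+1}$ and $R_{i+1}$ carry a $+$ rather than a $-$ in~\eqref{eq:definition}, so their lower bounds are in fact immediate.
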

\begin{proof}
    {Recall that $i^{*}$ is defined in~\eqref{eq:cond_on_i*} as the smallest integer such
    that $R_{i+1}< \left(\frac{\varepsilon^2}{18}\right)^2 \frac{\Delta}{128}= \frac{\varepsilon^3}{18^2}\cdot \frac{R_1}{128}$. Besides, by~\eqref{eq:definition} we also have $R_{i^*+1}\geq \eta^{i^{*}/2}R_1$. Therefore,
    $
    \eta^{i^{*}} \ge \left( \frac{\varepsilon^3}{18^2\cdot 128} \right)^{2}.
    $
    Let $\eps_1=\eps^8$. For every $i \le i^{*}+1$, it follows that 
    \begin{align*}
    L_{i} &\ge  \eta^{i^{*}}L_{1}-i^* L_1^{2/3}
    \ge (1-o(1))\left(\frac{\varepsilon^3}{18^2\cdot 128}\right)^{2}\left(1+\frac \varepsilon 9\right) \Delta\geq \eps_1\Delta\;,
    \end{align*}
    where we used that $\eps\leq \eps_0=10^{-3}$. Similar arguments can be applied for $T_i$ and $R_i$.}
\end{proof}

Recall the definitions of $L_1$, $T_1$ and $R_1$ and define the following three sequences for every $i\geq 0$,
\begin{align*}
L_{i+1}' := \eta^{i} L_1, \qquad
T_{i+1}' := \eta^{i} T_1, \qquad
R_{i+1}' := \eta^{i/2} R_1.
\end{align*}
The next lemma shows that $L_i$, $T_i$ and $R_i$, {as defined in~\eqref{eq:definition}}, are very close to $L_i'$,
$T_i'$ and $R_i'$ respectively.
\begin{lem}\label{lem:no:erro}
    For every $1\leq i \le i^{*}+1$, we have\\
    (a) $|L_i - L_i'| \le (L_i')^{5/6} = o(L_{i}')$, \\
    (b) $|T_i - T_i'| \le (T_i')^{5/6} = o(T_{i}')$, \\
    (c) $|R_i - R_i'| \le (R_i')^{5/6} = o(R_{i}')$.
\end{lem}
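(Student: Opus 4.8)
The plan is to prove all three statements simultaneously by induction on $i$, since the recursions in~\eqref{eq:definition} have exactly the same shape (linear contraction by a factor $\eta$ or $\eta^{1/2}$ plus an error term of size (previous value)$^{2/3}$). I will carry out the argument for $L_i$ in detail and remark that the cases of $T_i$ and $R_i$ are identical up to replacing $\eta$ by $\eta$ and $\eta^{1/2}$ and the base values accordingly. Set $\delta_i := L_i - L_i'$; the base case $i=1$ is trivial since $\delta_1 = 0$. For the inductive step, subtract the recursion $L_{i+1}' = \eta L_i'$ from $L_{i+1} = \eta L_i - L_i^{2/3}$ to get
\begin{equation*}
\delta_{i+1} = \eta\,\delta_i - L_i^{2/3}.
\end{equation*}
Since $L_i \le L_i' + |\delta_i| \le 2L_i'$ (using the inductive bound $|\delta_i| \le (L_i')^{5/6} = o(L_i')$), we have $L_i^{2/3} \le (2L_i')^{2/3} \le 2(L_i')^{2/3}$, and therefore
\begin{equation*}
|\delta_{i+1}| \le \eta\,|\delta_i| + 2(L_i')^{2/3} \le \eta (L_i')^{5/6} + 2(L_i')^{2/3}.
\end{equation*}

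Now I would use $L_{i+1}' = \eta L_i'$ to convert the right-hand side into a bound in terms of $L_{i+1}'$. Since $\eta \in (0,1)$ is a fixed constant and $L_i' = L_{i+1}'/\eta$, both terms are at most a constant multiple of $(L_{i+1}')^{5/6}$ and $(L_{i+1}')^{2/3}$ respectively. To make the bound close up to exactly $(L_{i+1}')^{5/6}$ one needs $L_{i+1}'$ large enough; this is where Lemma~\ref{lem:i:star} enters, guaranteeing $L_i' \ge \varepsilon_1 \Delta/2$ (say) for all $i \le i^*+1$, so for $\Delta \ge \Delta_0(\eps)$ large we have $(L_{i+1}')^{2/3} = o\big((L_{i+1}')^{5/6}\big)$ and the factor $\eta^{-5/6}$ in front of the first term is absorbed. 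More carefully, one wants
\begin{equation*}
\eta^{1/6}(L_{i+1}')^{5/6} + 2\eta^{-2/3}(L_{i+1}')^{2/3} \le (L_{i+1}')^{5/6},
\end{equation*}
which holds because $\eta^{1/6} < 1$ strictly and the second term is lower order once $(L_{i+1}')^{1/6} \ge C(\eps)$ for the appropriate constant. Since $i^* \le c\log_2(1/\eps)$ is bounded in terms of $\eps$ and each $L_i' \ge \varepsilon_1\Delta$, the requirement on $\Delta$ is uniform over all relevant $i$, so a single choice of $\Delta_0(\eps)$ suffices. The final observation $(L_i')^{5/6} = o(L_i')$ is then immediate from $L_i' = \Omega(\Delta)$.

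The main obstacle — though it is more bookkeeping than genuine difficulty — is checking that the inductive error bound genuinely closes up with the \emph{same} exponent $5/6$ rather than degrading by a small amount at each of the $i^*$ steps. The key points that make this work are: (i) the contraction factor $\eta$ (or $\eta^{1/2}$) is a constant strictly less than $1$, so it buys a strict improvement $\eta^{1/6} < 1$ on the leading term at each step; (ii) the additive error is of lower order, namely exponent $2/3 < 5/6$; and (iii) there are only boundedly many iterations (depending on $\eps$ only), and each parameter stays $\Omega(\Delta)$ by Lemma~\ref{lem:i:star}, so the lower-order terms never catch up before iteration $i^*+1$. For $T_i$ the recursion has a $+T_i^{2/3}$ term rather than $-L_i^{2/3}$, but the triangle inequality treats both signs identically, so $\delta_{i+1}^{T} = \eta\delta_i^T + T_i^{2/3}$ and the same estimate applies; for $R_i$ one replaces $\eta$ by $\eta^{1/2}$ throughout and uses $R_{i+1}' = \eta^{1/2}R_i'$, noting $\eta^{1/2} \in (0,1)$ is still a fixed constant $<1$ so the argument is unchanged.
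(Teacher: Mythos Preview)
Your proof is correct and follows essentially the same inductive approach as the paper: subtract the two recursions, use the strict contraction $\eta^{1/6}<1$ to show the dominant term closes up, and invoke Lemma~\ref{lem:i:star} to absorb the lower-order $(L_i')^{2/3}$ term. The paper's version is marginally slicker in that it uses the obvious inequality $L_i\le L_i'$ (and $T_i\ge T_i'$, $R_i\ge R_i'$) directly, which drops the absolute values and avoids your factor of~$2$, but this is purely cosmetic.
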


\begin{proof}
    {Let us prove part (a) using induction on $i$. The proofs of parts (b) and (c) use identical arguments and are omitted.}

    The base case $i = 1$ is
    clearly true as $L_1'=L_1$. If (a) holds for $i$,
    then
    \begin{align*}
        |L_{i+1}' - L_{i+1}| & = L_{i+1}' - L_{i+1} = \eta L_{i}' - \eta L_{i} + L_{i}^{2/3}  \le \eta (L_{i}')^{5/6} + L_{i}^{2/3},
    \end{align*}
    where the first equality comes from the obvious fact that $L_j \le L_j'$
    for all $j$.
    To see that the expression above  is at most $(L_{i+1}')^{5/6}$, it suffices to note that
    \begin{align*}
        (L_{i+1}')^{5/6} - \eta (L_{i}')^{5/6}
        = (\eta^{5/6} - \eta) (L_{i}')^{5/6}
        \ge (L_{i}')^{2/3}
        \ge L_{i}^{2/3},
    \end{align*}
    {where in the first inequality, we used that $L_i'\geq L_i\geq \eps_1\Delta$ from Lemma~\ref{lem:i:star}.}
\end{proof}

This other lemma is a consequence of Lemma~\ref{lem:no:erro} and shows that $L_i$ and $T_i$ are close throughout the iterative colouring.
\begin{lem}\label{lem:L:T:close}
For every $1\leq i \le i^{*}+1$, we have
    ${L_i/T_i = 1 + \varepsilon/9+o(1)}$.
\end{lem}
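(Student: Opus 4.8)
The plan is to deduce this directly from Lemmas~\ref{lem:i:star} and~\ref{lem:no:erro}: once we know that $L_i$ and $T_i$ are each within a lower-order error term of $L_i'$ and $T_i'$, and that the ratio $L_i'/T_i'$ is exactly a constant independent of $i$, the claim falls out after a one-line division.

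First I would observe that, by the definitions $L_i' = \eta^{i-1}L_1$ and $T_i' = \eta^{i-1}T_1$, the common factor $\eta^{i-1}$ cancels, so that for every $i$
$$
\frac{L_i'}{T_i'} = \frac{L_1}{T_1} = \frac{(1+\eps/9)\Delta}{\Delta} = 1 + \frac{\eps}{9}.
$$
Next, Lemma~\ref{lem:no:erro}(a),(b) lets me write $L_i = L_i'(1+\delta_i)$ and $T_i = T_i'(1+\delta_i')$ with $|\delta_i| \le (L_i')^{-1/6}$ and $|\delta_i'| \le (T_i')^{-1/6}$. By Lemma~\ref{lem:i:star} we have $L_i' \ge L_i \ge \eps_1\Delta$ and $T_i' \ge T_i \ge \eps_2\Delta$ for all $1 \le i \le i^{*}+1$, hence $|\delta_i|, |\delta_i'| = O(\Delta^{-1/6})$ with an implicit constant depending only on $\eps$; in particular $1+\delta_i'$ is bounded away from $0$ for $\Delta$ large, and $\delta_i, \delta_i' = o(1)$ as $\Delta \to \infty$, uniformly over the (bounded, since $i^{*}$ depends only on $\eps$) range $1 \le i \le i^{*}+1$.

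Finally I would divide:
$$
\frac{L_i}{T_i} = \frac{L_i'}{T_i'}\cdot\frac{1+\delta_i}{1+\delta_i'} = \left(1 + \frac{\eps}{9}\right)(1+o(1)) = 1 + \frac{\eps}{9} + o(1),
$$
which is exactly the assertion. There is no genuine obstacle here; the only thing to be careful about is that the error terms are $o(1)$ \emph{uniformly} in $i$ over the relevant range, which is precisely what the combination of Lemma~\ref{lem:i:star} (giving $L_i', T_i' = \Omega(\Delta)$) and Lemma~\ref{lem:no:erro} (giving the $5/6$-power bound on the deviation) provides.
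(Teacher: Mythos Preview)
Your proof is correct and takes essentially the same approach as the paper's: both compute $L_i'/T_i' = 1+\eps/9$ exactly and then use Lemma~\ref{lem:no:erro} (together with the $\Omega(\Delta)$ lower bounds) to transfer this to $L_i/T_i$ up to $o(1)$.

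One small slip worth fixing: you write ``$T_i' \ge T_i$'', but the recursion $T_{j+1} = \eta T_j + T_j^{2/3}$ actually gives $T_i \ge T_i'$, so your chain $T_i' \ge T_i \ge \eps_2\Delta$ is false as stated. This does not damage the argument, since what you really need is $T_i' = \Omega(\Delta)$, and that follows immediately from $T_i' = \eta^{i-1}\Delta$ with $i \le i^{*}+1$ bounded purely in terms of $\eps$. (The paper in fact exploits the correct directions $L_i \le L_i'$ and $T_i \ge T_i'$ to obtain the \emph{exact} upper bound $L_i/T_i \le 1+\eps/9$, invoking Lemma~\ref{lem:no:erro} only for the lower bound; your symmetric treatment is perfectly adequate for the stated lemma.)
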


\begin{proof}
    Since $L_i \le L_{i}'$ and $T_i \ge T_{i}'$,
    $$
    \frac {L_i} {T_i}
    \le \frac {L_i'} {T_i'}
    = \frac {L_1 \eta^{i-1}} {T_1 \eta^{i-1}}
    =  1 + \frac \varepsilon 9 .
    $$
    The other direction follows from Lemma~\ref{lem:no:erro},
    \begin{equation*}
        \frac {L_i} {T_i}
        \ge \frac {(1-o(1))L_i'} {(1+o(1))T_i'}
        =   1 + \frac \varepsilon 9 -o(1).
    \end{equation*}
\end{proof}

Recall that $L_{i}(e)$ is the list of colours that are still
available to $e$ at the beginning of the $i$-th iteration.  For a vertex $v$,
we define
$$
L_{i}(v) := \bigcup_{\substack{e \ni v: \\ \text{{$e$ uncoloured}}}} L_{i}(e)\;,
$$
to be the union of the colour lists
of all {uncoloured} edges incident to $v$.
When we say that a colour $c$ is removed from the list $L_i(v)$, it is also removed from every list $L_i(e)$ with $e$ {uncoloured and} incident to $v$.

In Section~\ref{sec:outline} we sketched the steps that we perform at the $i$-th iteration of the
procedure. Here we make it precise, provided that $P(i-1)$ is satisfied:
\begin{enumerate}[{S}.1:]
    \item For every {uncoloured} edge $e = uv$, truncate $L_i(e)$ {by
        removing colours in an arbitrary way until it has size precisely $L_{i}$ (recall that by (P.1), we have $\ell_i(e)\geq L_i$).} When a colour $c$
        is removed from $L_i(e)$, $e$ is also removed from $T_i(v,c)$ and
        $T_i(u,c)$.
    \item For every uncoloured edge $e$, assign it a colour chosen uniformly at random from
        $L_i(e)$.
    \item Uncolour every edge $e$ which is assigned the same colour as one of its adjacent edges.
    \item If $e$ is assigned a colour $c$ and is not uncoloured in the previous
        step, uncolour it with probability $\Eq_i(e,c)$~(to be defined below).
    \item For each vertex $v$ and colour $c \in L_{i}(v)$, if $c$ is retained
        by {an edge} in $T_{i}(v,c)$, then remove $c$ from $L_i(v)$.
    \item For each vertex $v$ and colour $c \in L_{i}(v)$, if $c$ is not retained by an edge in $T_{i}(v,c)$, then remove $c$ from $L_i(v)$ with
        probability $\Vq_i(v, c)$~(to be defined below).
\end{enumerate}
As we pointed out before, the steps S.4 and S.6 correspond to the equalizing coin flips and are performed to correct the fluctuations of the process.

For an edge $e=uv$ and a colour $c \in L_{i}(e)$, let $\kp_{i}(e,c)$ denote the
probability that no edge adjacent to $e$ is assigned colour $c$. It follows
from Lemma~\ref{lem:i:star} and
Lemma~\ref{lem:L:T:close} that
\begin{align*}
\kp_i(e,c)
    &= \left(1-\frac{1}{L_i}\right)^{t_i(u,c)+ t_i(v,c) - 2} \\
    & > \exp\left(-\frac{t_i(u,c)+ t_i(v,c) - 2}{L_i}\right) -
    O\left(\frac{1}{L_i}\right) \\
    & \ge \exp\left(-\frac{2 T_i - 2}{L_i}\right) -
    O\left(\frac{1}{L_i}\right) \\
    & \ge \expo^{-2}\left(\exp\left\{ \frac {2\eps} {9+\varepsilon} - o(1) +
    \frac 2 {L_i}\right\} \right) -
    O\left(\frac{1}{L_i}\right) \\
    &> \expo^{-2}.
\end{align*}
The last inequality holds since $\Delta$ is large with respect to $\varepsilon$ and
Lemma~\ref{lem:i:star} shows that $L_i\geq \eps_1 \Delta$.

Hence, one can choose $\Eq_i(e,c):= 1 -
1/(\expo^{2} \kp_{i}(e,c)) > 0$.  This ensures that the probability that $e$ retains
$c$, {conditional on $c$ being assigned to $e$}, is precisely $(1-\Eq_i(e,c))\kp_{i}(e,c) = \expo^{-2}$.  Therefore, the expected value of $r_{i+1}(v,c)$, is precisely
{$(1-\expo^{-2})r_i(v,c)$}.

Similarly, for every colour $c \in L_i(v)$, the probability that the colour $c$
is not retained by an edge incident to $v$ is $\rem_{i}(v,c) = 1 - t_{i}(v,c) /
(\expo^2 L_i) > 1-\expo^{-2}$.
Thus choosing $\Vq_i(v,c) := 1 -
(1-\expo^{-2})/\rem_{i}(v,c)$ ensures that the probability that $c$ remains in
$L_i(v)$ is precisely $(1-\Vq_i(v,c))\rem_{i}(v,c) = 1 - \expo^{-2}$.

{Consider an edge $e = uv$. For a colour $c \in L_i(e)$ to be in
$L_{i+1}(e)$, it must be in both $L_{i+1}(u)$ and $L_{i+1}(v)$. If the two events $\{c \in
L_{i+1}(u)\}$ and $\{c \in L_{i+1}(v)\}$ were independent, then the expected value
of $\ell_{i+1}(v,c)$ would be precisely $(1-\expo^{-2})^2\,\ell_i(e)$. It can be
shown that this estimation is not far from being accurate.}

{For an edge $e \in T_i(v,c)$, in order that $e$ also belongs to $T_{i+1}(v,c)$, $e$
must stay uncoloured and $c$ must also be in $L_{i+1}(e)$. If the two events
\{$e$ is uncoloured\} and $\{c \in L_{i+1}(e)\}$ were independent then the expected
value of $t_{i+1}(v,c)$ would be precisely $(1-\expo^{-2})^2\,t_i(v,c)$. Although the assumption of independence is
not true, it turns out that the effect of dependency is small.}

The following lemma turns the above intuitions into precise statements. The
proofs are omitted as they are exactly the same as those in
\cite[chap~14.3]{Molloy2002graph}.

\begin{lem}\label{lem:expectation}
Suppose that $P(i-1)$ holds. Then, for every vertex $v$, every edge $e$ and every colour $c \in L_i(v)$,
    \begin{enumerate}[(a)]
    \item $\E{r_{i+1}(v,c)}  =   \left(1-\expo^{-2}  \right)r_i(v,c),$
    \item $\E{\ell_{i+1}(e)}  \geq   (1-\expo^{-2})^2\,L_i,$ and
    \item $\E{t_{i+1}(v,c)}  \leq   (1-\expo^{-2} )^2\,T_i + 1.$
    \end{enumerate}
\end{lem}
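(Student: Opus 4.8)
The plan is to prove Lemma~\ref{lem:expectation} by carefully enumerating, for each quantity, exactly which random events cause an edge (or a colour at a vertex) to survive into the next iteration, and then using the equalizing coins of steps S.4 and S.6 — together with the probabilities $\kp_i(e,c)$ and $\rem_i(v,c)$ computed just above the lemma — to pin down or bound the relevant survival probabilities. Throughout, we work conditionally on the partial colouring and lists at the start of iteration $i$, assuming $P(i-1)$, so that after the truncation in S.1 every uncoloured edge has list size exactly $L_i$, and $r_i(v,c)\le R_i$, $t_i(v,c)\le T_i$, $\ell_i(e)=L_i$.

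For part (a): a vertex $u\in R_i(v,c)$ leaves $R_{i+1}(v,c)$ precisely when the edge $uv$ retains its assigned colour (if it stays uncoloured, then $c\in S_u$ still holds and $uv$ is still uncoloured, so $u\in R_{i+1}(v,c)$; note $c\in S_u$ means $uv$ is never $c$-free so the list lottery is irrelevant here). By the choice $\Eq_i(e,c)=1-1/(\expo^2\kp_i(e,c))$, the probability that $uv$ retains its colour is exactly $\expo^{-2}$, so $\E{r_{i+1}(v,c)}=\sum_{u\in R_i(v,c)}(1-\expo^{-2})=(1-\expo^{-2})r_i(v,c)$ by linearity of expectation — this gives an exact equality with no dependency issues since each edge $uv$ is handled independently.

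For parts (b) and (c): a colour $c\in L_i(e)$ survives to $L_{i+1}(e)$ iff $c\in L_{i+1}(u)\cap L_{i+1}(v)$, and by the choice of $\Vq_i$ the marginal probability $\p{c\in L_{i+1}(u)}=1-\expo^{-2}$ (and likewise for $v$); for (c), $e\in T_i(v,c)$ stays in $T_{i+1}(v,c)$ iff $e$ remains uncoloured \emph{and} $c\in L_{i+1}(e)$, where $\p{e\text{ uncoloured}}=1-\expo^{-2}$ after the equalizing coin. The issue is that the two events $\{c\in L_{i+1}(u)\}$ and $\{c\in L_{i+1}(v)\}$ (resp. $\{e\text{ uncoloured}\}$ and $\{c\in L_{i+1}(e)\}$) are not independent, so one cannot simply multiply the marginals. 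The resolution, exactly as in Chapter~14.3 of \cite{Molloy2002graph}, is to observe that the dependency is weak and of the right sign: for (b), the positive correlation gives $\E{\ell_{i+1}(e)}\ge\sum_{c\in L_i(e)}\p{c\in L_{i+1}(u)}\p{c\in L_{i+1}(v)}=(1-\expo^{-2})^2 L_i$; for (c), a more delicate argument shows the extra contribution from the dependency contributes at most $+1$ to the sum over $c$, yielding $\E{t_{i+1}(v,c)}\le(1-\expo^{-2})^2 T_i+1$.

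The main obstacle is the correlation analysis in part (c): one has to show that conditioning on $e$ remaining uncoloured does not increase $\p{c\in L_{i+1}(e)}$ by more than a controlled amount, and that summing these corrections over all $c$ loses only an additive constant rather than something growing with $T_i$. Since this is a verbatim reproduction of the corresponding step in \cite[chap~14.3]{Molloy2002graph} — where it is carried out in the list-edge-colouring setting, and the presence of the reserved sets $S_v$ changes nothing about the list dynamics (reserved colours are simply never placed in any $L_i(e)$) — we will invoke that argument rather than reproduce it, which is why the lemma statement notes the proofs are omitted.
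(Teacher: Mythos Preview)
Your proposal is correct and takes essentially the same approach as the paper: part (a) follows directly from the equalizing coin in S.4 making the retention probability exactly $\expo^{-2}$, and for parts (b) and (c) both you and the paper defer to \cite[chap.~14.3]{Molloy2002graph} after noting that the relevant pairs of events have the right marginals and only weak dependence. One minor slip in your sketch of (c): the correction term ``$+1$'' arises from summing over the edges $e\in T_i(v,c)$, not over colours, and the second event you want is $\{c\in L_{i+1}(w)\}$ for $e=vw$ (the $v$-side is irrelevant since if $c$ leaves $L_i(v)$ then $T_{i+1}(v,c)=\emptyset$); neither affects the validity of the argument.
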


\subsection{The Concentration}\label{sec:con}

This subsection sketches the proof that $\ell_{i+1}(e)$, $t_{i+1}(v,c)$ and
$r_{i+1}(v,c)$ have highly concentrated distributions given that $P(i-1)$
holds.

\begin{lem}\label{lem:concentration}
    There exists a constant $\beta>0$ such that for every constant $\gamma>0$ and for every $1\leq i \le i^{*}$: \\
    (a) For every vertex $v$ and every colour $c \in L_i(v)$,
    $$
    \p{|r_{i+1}(v,c) - \E{r_{i+1}(v,c)}| \ge
    \gamma\sqrt{\E{r_{i+1}(v,c)}}\Delta^{1/6}}  \le \exp\{-\beta \gamma^2\Delta^{1/3}\};$$
    (b) For every edge $e$,
    $$
    \p{|\ell_{i+1}(e) - \E{\ell_{i+1}(e)}| >
     \gamma\sqrt{L_{i+1}}\Delta^{{1/6}}} \leq \exp\{-\beta
         \gamma^2\Delta^{1/3}\};
    $$
    (c) For every vertex $v$ and colour $c$,
    $$
        \p{|t_{i+1}(v,c) - \E{t_{i+1}(v,c)}| >
         \gamma\sqrt{L_{i+1}}\Delta^{{1/6}}} \leq \exp\{-\beta
             \gamma^2\Delta^{1/3}\}.
    $$
\end{lem}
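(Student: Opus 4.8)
The plan is to prove all three parts by a single unified application of a bounded-differences concentration inequality with a Lipschitz-type condition, exactly as in Chapter~14.3 of~\cite{Molloy2002graph}. The random variables $r_{i+1}(v,c)$, $\ell_{i+1}(e)$ and $t_{i+1}(v,c)$ are each determined by the random choices made at steps S.2 (colour assignments) and S.4, S.6 (equalizing coin flips) during the $i$-th iteration. I would expose these choices as the underlying independent random variables and apply a Talagrand-type inequality (in the form used for such colouring arguments, e.g.\ the ``simple concentration bound'' / Talagrand's inequality in~\cite[chap.~10]{Molloy2002graph}), which requires two ingredients: (i) changing the outcome of a single trial changes the random variable by at most some bounded amount $c$, and (ii) to certify that the random variable is at least $s$, it suffices to expose at most $rs$ of the trials (the ``$r$-certifiable'' condition). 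For all three quantities these constants are $O(1)$: changing the colour on one edge, or one vertex-colour equalizing coin, or one edge-colour equalizing coin, affects $\ell_{i+1}(e)$, $t_{i+1}(v,c)$ or $r_{i+1}(v,c)$ by a bounded amount because only edges adjacent to a fixed edge $e$, or incident to a fixed vertex $v$, are relevant, and each such change can flip only boundedly many ``$c\in L_{i+1}$'' or ``uncoloured'' indicators — except that a colour assignment at an edge incident to $v$ can remove $c$ from all of $L_i(v)$ at once, but that event still changes each of our counts by at most $1$ in the relevant direction (it only makes the count drop to $0$), so the bounded-differences constant is absorbed. Certifiability is immediate: to witness $r_{i+1}(v,c)\ge s$ one exposes, for each of $s$ surviving neighbours $u$, only the colour assigned to $uv$ together with the colours on the $O(\Delta)$ edges adjacent to $uv$ — wait, that is not $O(1)$ per unit, so instead I would use the version of Talagrand's inequality for such procedures that only needs the Lipschitz/bounded-differences condition plus an upper bound on the variable, or equivalently use the ``Azuma–Talagrand'' hybrid of~\cite[chap.~14]{Molloy2002graph} which is tailored precisely to these three quantities.

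Concretely, I would proceed as follows. First, fix a vertex $v$, edge $e$, and colour $c$, and condition on the partial colouring and lists at the start of iteration $i$ (so that $P(i-1)$ holds and, after S.1, $\ell_i(e)=L_i$, $t_i(v,c)\le T_i$, $r_i(v,c)\le R_i$, with all three of order $\Delta$ by Lemma~\ref{lem:i:star}). Second, write each target random variable as a function of the vector of independent trials, and verify the bounded-differences constant is some absolute $c_0$ and that the variable is $r_0$-certifiable for absolute constants $c_0,r_0$ — here I would follow verbatim the certification arguments of~\cite[Lemmas 14.4--14.6 or their analogues]{Molloy2002graph}, noting the one subtlety that removing a colour from an entire list $L_i(v)$ via S.5 at a neighbouring vertex only ever decreases our counts and by at most the whole count, which for an upper-tail bound is handled by certifying the complementary quantity. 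Third, apply the concentration inequality with deviation $t=\gamma\sqrt{\E{X}}\,\Delta^{1/6}$ (resp.\ $\gamma\sqrt{L_{i+1}}\,\Delta^{1/6}$); since $\E{X}=\Theta(\Delta)$ and $L_{i+1}=\Theta(\Delta)$ by Lemmas~\ref{lem:i:star} and~\ref{lem:no:erro}, Talagrand's inequality yields a tail bound of order $\exp\{-\Omega(t^2/(c_0^2 r_0 \,\E{X}))\}=\exp\{-\Omega(\gamma^2\Delta^{1/3})\}$, which is exactly the claimed $\exp\{-\beta\gamma^2\Delta^{1/3}\}$ for a suitable absolute constant $\beta>0$. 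The constant $\beta$ is absolute (it comes only from the Talagrand constant and from $c_0,r_0$ and the $\Theta(1)$ constants in Lemma~\ref{lem:i:star}), and it does not depend on $\gamma$ or $i$, which is what the statement requires.

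I expect the main obstacle to be a clean justification of the Lipschitz/certifiability conditions in the presence of step S.5, where a single colour assignment at an edge incident to $v$ can delete a colour from $L_i(v)$ in one shot and thereby affect $\ell_{i+1}$, $t_{i+1}$ for many edges at $v$ simultaneously. The resolution — and the reason the proof in~\cite{Molloy2002graph} goes through — is that this deletion is monotone and ``all-or-nothing'' for the quantity attached to $v$: it can only drop $t_{i+1}(v,c)$ (resp.\ the contribution of $c$ to some $\ell_{i+1}(e)$) to zero, so for the \emph{upper} tail one certifies the event $\{t_{i+1}(v,c)\le s\}$ cheaply, and for the \emph{lower} tail the relevant trials (the $O(1)$ edges whose assigned colours matter for a given surviving edge) are genuinely bounded in number. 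Since the precise bookkeeping is identical to~\cite[chap.~14.3]{Molloy2002graph} and contributes no new idea, I would state the three bounds, indicate that each is an instance of Talagrand's inequality under the verified conditions, and refer the reader to~\cite{Molloy2002graph} for the routine verification, recording explicitly only the two facts that are specific to our setting: that $\E{r_{i+1}(v,c)}$, $\E{\ell_{i+1}(e)}$, $\E{t_{i+1}(v,c)}$ are all $\Theta(\Delta)$ (Lemmas~\ref{lem:i:star}, \ref{lem:expectation}), and that the bounded-differences and certifiability constants are absolute, so that $\beta$ may be chosen independent of $\gamma$ and of the iteration index.
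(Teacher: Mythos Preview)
Your overall framework is right and matches the paper: Talagrand's inequality applied to the independent trials of S.2, S.4, S.6, with the reference to~\cite[chap.~14]{Molloy2002graph} for details. For part~(a) the paper's argument is exactly the simple one you sketch: changing one colour assignment or one coin flip changes $r_{i+1}(v,c)$ by at most $1$, and certifying $r_{i+1}(v,c)\ge s$ takes $O(s)$ outcomes (for each surviving edge $uv$ you exhibit either a single conflicting adjacent edge or the equalizing coin that uncoloured it). Your ``wait, that is not $O(1)$ per unit'' is unwarranted here --- you do not need the $O(\Delta)$ adjacent assignments, only one witness per uncoloured edge.

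Where your proposal has a genuine gap is parts~(b) and~(c). You claim a single direct application of Talagrand with absolute Lipschitz/certifiability constants, and you identify the obstacle as the all-or-nothing deletion in S.5, to be handled by monotonicity. That is not the real obstacle, and monotonicity does not resolve it. The problem is certifiability of \emph{survival}: to certify that a colour $c$ remains in $L_{i+1}(e)$ for $e=uv$ you must witness that no edge in $T_i(u,c)\cup T_i(v,c)$ retained $c$, which requires $\Theta(\Delta)$ outcomes per colour; likewise, to certify that an edge retained its colour you must witness the absence of conflicts on $\Theta(\Delta)$ neighbours. With $\xi_2=\Theta(\Delta)$ the Talagrand bound degrades and no longer gives $\exp\{-\Omega(\Delta^{1/3})\}$. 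The paper (following~\cite[chap.~14.4]{Molloy2002graph}) explicitly says that parts~(b) and~(c) are \emph{not} handled by a single application: one splits $\ell_{i+1}(e)$ and $t_{i+1}(v,c)$ into a linear combination of auxiliary random variables (roughly, counting colours \emph{removed} rather than surviving, and separating the contributions of S.5 and S.6), each of which \emph{does} satisfy the $O(1)$-certifiability condition, and then combines the individual concentration bounds. Your proposal should name this decomposition as the missing ingredient rather than the monotonicity workaround.
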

The tool used to prove the above lemma is Talagrand's inequality
\cite[chap.~10]{Molloy2002graph}. It implies that to show a random variable $X$ is
concentrated, it suffices to verify that there exist two constants $\xi_1$
and $\xi_2$ such that:
\begin{enumerate}[(a)]
    \item changing the outcome of one random choice can affect $X$ by at
        most $\xi_1$; 
    \item for every $s$, if $X \ge s$, then there exists a set of at most
        $\xi_2 s$ random choices whose {outcomes} certify that $X \ge s$.
\end{enumerate}
If these two conditions hold, then $$\p{|X-\E{X}| > \alpha \sqrt{\E{X}}} \le
\exp\{-\beta \alpha^2\},$$ for $\beta < \frac 1 {10 \xi_1^2 \xi_2}$ and
$\alpha \le \sqrt{\E{X}}$. In the proof of the Lemma~\ref{lem:concentration},
the random choices are the colour assignments and equalizing coin flips. We can
take $\beta$ as the minimum of the $\beta$'s for the three types of
random variables and $\alpha=\gamma\Delta^{1/6}$.

For the part (a) of Lemma~\ref{lem:concentration}, note that changing the colour
assigned to an edge from $c_1$ to $c_2$ can only uncolour at most one edge
incident to $v$, because two or more incident edges assigned colour $c_2$ would
be uncoloured anyway. Meanwhile changing the outcome of one equalizing coin
flip can only change $r_{i+1}(v,c)$ by at most one. And for $s$ edges in
$R_{i}(v,c)$ which get uncoloured, either by conflicts or by equalizing coin
flips, at most $s$ outcomes of random choices are enough to certify it.

The proofs of parts (b) and (c) are more complicated. The idea is to split the random
variables into a linear combination of related random variables, which can be
shown to have concentrated distribution by Talagrand's inequality. The detailed
proofs are given in \cite[chap.~14.4]{Molloy2002graph}.

\subsection{Control at each iteration}\label{sec:ite}

In this section we will show that the property $P(i)$ defined in Section \ref{sec:outline} holds for
every $0 \leq i\leq i^*$ with positive probability.
We define the following set of events,
    \begin{align*}
        A^{\{c,d\}}_C &:=[\lambda_{i+1}(C) < \Lambda_{i+1}]
        & & \quad \text{for every pair of colours $c,d$ and
        every}\\
        & \;\;\;\;\cap [\text{$C$ is $\{c,d\}$-significant at the \((i+1)\)-st iteration}]
        & & \qquad  \text{cycle $C$ which is $\{c,d\}$-significant at}\\
        & & & \qquad  \text{the $i$-th iteration}, \\
        B_e &:=  [\ell_{i+1}(e) < L_{i+1}]  & & \quad \text{for  every edge $e$ uncoloured}, \\
        C_{v,c}&:= [t_{i+1}(v,c) > T_{i+1}] & & \quad \text{for  every vertex \(v\) and every colour $c$, and}\\
        D_{v,c} &:= [r_{i+1}(v,c) {>} R_{i+1}] & & \quad \text{for  every vertex \(v\) and every colour $c$}\;.
    \end{align*}
    Let $\cE$ be the collection of the above events. Observe that $\p{P(i)} =
\p{\cap_{E \in \cE} \overline{E}}$ so it suffices to lower bound the latter probability.

The next lemma bounds the probability of the events $B_e$, $C_{v,c}$ and $D_{v,c}$ {using the results obtained in Sections~\ref{sec:exp} and~\ref{sec:con}}.
\begin{lem}\label{lem:bounds}
    Given that {$P(i-1)$} holds, for every uncoloured edge $e$, every vertex $v$, and colour $c$
    \begin{enumerate}[(a)]
        \item $\p{B_e} = \expo^{-\Omega(\Delta^{1/3})}$;
        \item $\p{C_{v,c}} = \expo^{-\Omega(\Delta^{1/3})}$;
        \item $\p{D_{v,c}} = \expo^{-\Omega(\Delta^{1/3})}$.
    \end{enumerate}
\end{lem}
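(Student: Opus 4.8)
The plan is to derive each of the three bounds by combining the expectation estimates from Lemma~\ref{lem:expectation} with the concentration estimates from Lemma~\ref{lem:concentration}, choosing the deviation parameter $\gamma$ so that the concentration radius is smaller than the gap between the expected value and the threshold defining the bad event. The key point throughout is that, by the recursive definitions in~\eqref{eq:definition}, the thresholds $L_{i+1}$, $T_{i+1}$, $R_{i+1}$ differ from $\eta L_i$, $\eta T_i$, $(1-\expo^{-2})R_i$ by a term of order $L_i^{2/3}$, $T_i^{2/3}$, $R_i^{2/3}$ respectively, and by Lemma~\ref{lem:i:star} all three of $L_i,T_i,R_i$ are $\Omega(\Delta)$; hence these polynomial correction terms are of order $\Delta^{2/3}$, which comfortably dominates the concentration radius of order $\sqrt{\Delta}\cdot\Delta^{1/6}=\Delta^{2/3}$ only up to a constant, so one must be a little careful with constants but the estimates go through for $\Delta$ large.

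First I would handle part (a), the event $B_e=[\ell_{i+1}(e)<L_{i+1}]$. By Lemma~\ref{lem:expectation}(b), $\E{\ell_{i+1}(e)}\ge \eta L_i$. Since $L_{i+1}=\eta L_i - L_i^{2/3}$ and $L_i\ge \eps_1\Delta$ by Lemma~\ref{lem:i:star}, we have $\E{\ell_{i+1}(e)}-L_{i+1}\ge L_i^{2/3}=\Omega(\Delta^{2/3})$. On the other hand, by Lemma~\ref{lem:concentration}(b), $\ell_{i+1}(e)$ deviates from its mean by more than $\gamma\sqrt{L_{i+1}}\Delta^{1/6}=O(\gamma\Delta^{2/3})$ only with probability $\exp\{-\beta\gamma^2\Delta^{1/3}\}$. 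Choosing $\gamma$ a small enough constant (depending on $\eps$) so that $\gamma\sqrt{L_{i+1}}\Delta^{1/6}<L_i^{2/3}$, the event $B_e$ is contained in the event that $\ell_{i+1}(e)$ deviates by more than this amount, so $\p{B_e}\le \exp\{-\beta\gamma^2\Delta^{1/3}\}=\expo^{-\Omega(\Delta^{1/3})}$.

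Part (c) is entirely analogous, using Lemma~\ref{lem:expectation}(c), which gives $\E{t_{i+1}(v,c)}\le \eta T_i+1$, together with $T_{i+1}=\eta T_i + T_i^{2/3}$, so that $T_{i+1}-\E{t_{i+1}(v,c)}\ge T_i^{2/3}-1=\Omega(\Delta^{2/3})$; then invoke Lemma~\ref{lem:concentration}(c) with a suitably small constant $\gamma$. For part (b), the event $D_{v,c}=[r_{i+1}(v,c)>R_{i+1}]$: here Lemma~\ref{lem:expectation}(a) gives the exact value $\E{r_{i+1}(v,c)}=(1-\expo^{-2})r_i(v,c)\le (1-\expo^{-2})R_i$ using $P(i-1)$, while $R_{i+1}=(1-\expo^{-2})R_i+R_i^{2/3}$, so the gap is $\ge R_i^{2/3}=\Omega(\Delta^{2/3})$; since $\sqrt{\E{r_{i+1}(v,c)}}\Delta^{1/6}=O(\Delta^{2/3})$, Lemma~\ref{lem:concentration}(a) with small constant $\gamma$ again yields $\p{D_{v,c}}=\expo^{-\Omega(\Delta^{1/3})}$. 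I do not expect any serious obstacle here: the only thing to be careful about is that the constant $\gamma$ may depend on $\eps$ (through $\eps_1,\eps_2,\eps_3$ from Lemma~\ref{lem:i:star}), but since $\beta$ is absolute and $\gamma$ is a fixed constant once $\eps$ is fixed, the final bound is still $\expo^{-\Omega(\Delta^{1/3})}$ with the implied constant depending only on $\eps$, which is all that is needed. The genuinely hard part of the whole argument — bounding $\p{A^{\{c,d\}}_C}$ for significant cycles — is deferred to the subsequent analysis and is not part of this lemma.
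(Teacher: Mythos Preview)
Your argument is essentially identical to the paper's: compute the gap between the expectation (from Lemma~\ref{lem:expectation}) and the threshold $L_{i+1}$, $T_{i+1}$, or $R_{i+1}$, observe this gap is $\Omega(\Delta^{2/3})$ by Lemma~\ref{lem:i:star}, and then choose the constant $\gamma$ in Lemma~\ref{lem:concentration} small enough that the concentration radius fits inside the gap. The only slip is a labelling swap: what you call ``Part (c)'' (the argument for $t_{i+1}(v,c)$ via Lemma~\ref{lem:expectation}(c)) is actually part~(b) of the present lemma, and your ``part (b), the event $D_{v,c}$'' is actually part~(c); the mathematics is unaffected.
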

\begin{proof}
    For part $(a)$,
    recall that $L_{i+1} = (1-\expo^{-2})^2\, L_{i} - L_{i}^{2/3}$ by definition.
    By Lemma~\ref{lem:i:star}, $L_i\geq \eps_1\Delta$ and thus, there exists a constant $\gamma$ such that $L_i^{2/3}\geq \gamma \sqrt{L_{i+1}}
     \Delta^{\frac{1}{6}}$.
    Combining these results with the computation of $\E{\ell_{i+1}(e)}$ in Lemma~\ref{lem:expectation}
        \begin{align*}
            \p{B_e} & = \p{\ell_{i+1}(e) < L_{i+1}} \\
            &= \p{\ell_{i+1}(e) < (1-\expo^{-2})^2\,L_{i} - L_{i}^\frac{2}{3}} \\
        & \leq \p{\ell_{i+1}(e) < \E{\ell_{i+1}(e)} - \gamma \sqrt{L_{i+1}}\Delta^{\frac{1}{6}}}  \\
        & \le \exp\{-\beta \gamma^2 \Delta^\frac{1}{3}\},
        \end{align*}
        where the last inequality follows from Lemma~\ref{lem:concentration}.

    The proofs of the parts $(b)$ and $(c)$ are along the same lines of the previous one and use the bounds on the expected values derived in Lemma~\ref{lem:expectation} and the concentration inequalities provided in Lemma~\ref{lem:concentration}.
\end{proof}

Now we compute a bound on the probability of $A^{\{c,d\}}_{C}$.
\begin{lem}\label{lem:bounds2}
    {For every $0\leq i\leq i^*$, every pair of colours $\{c,d\}$ and every cycle $C$ that is $\{c,d\}$-significant at the $i$-th iteration,} we have
\begin{align*}
    \p{A^{\{c,d\}}_{C}}&{\leq} 
    \Lambda_{i+1} \lambda_i(C)^{2\Psi_{1}+\Lambda_{i+1}}
    L_i^{-(\lambda_i(C)-\Lambda_{i}/2+4 \Psi_1)} \;.
\end{align*}
Moreover, we also have
\begin{align*}
-\log_\Delta{\p{A_C^{c,d}}} &\geq \lambda_i(C)/3\;.
\end{align*}
\end{lem}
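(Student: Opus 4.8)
The plan is to estimate the probability that a $\{c,d\}$-significant cycle $C$ ends the $i$-th iteration with fewer than $\Lambda_{i+1}$ free edges \emph{and} remains significant. The key observation is that for $C$ to stay significant at the $(i+1)$-st iteration while losing too many free edges, most of its currently free edges must stop being free, and this can only happen in controlled ways: a free edge $e = uv$ of $C$ ceases to be $\{c,d\}$-free at iteration $i+1$ only if the colour ($c$ or $d$) witnessing its freeness is removed from $L_{i+1}(e)$, which (since $C$ stays $\{c,d\}$-compatible) means either $e$ retains a colour assigned to it at step S.2 and joins the partition as a coloured edge, or the witnessing colour is deleted from the list via steps S.5/S.6. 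First I would set up a \emph{witness structure}: record, for each of the $\lambda_i(C)$ free edges, which of $c,d$ certifies freeness and, for the edges that lose freeness, a bounded certificate (the assigned colour, the conflicting edge or the equalizing coin). The number of edges of $C$ that can newly become reserved or coloured is tied to $\Psi_1$ via the multiplicity/reserved-edge bound in the definition of significance, which is where the $2\Psi_1$ exponent comes from.

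Next I would do the counting. Suppose $\lambda_{i+1}(C) < \Lambda_{i+1}$; then at least $\lambda_i(C) - \Lambda_{i+1}$ of the free edges lose their free status, but $C$ must still have $\{c,d\}$-multiplicity at most $\Psi_1$ and at most $\Psi_1$ reserved edges after the iteration (recall $\Psi_{i+1}\le\Psi_i=\Psi_1\cdot 4^{i^*-i}$; here I only use the crude bound that the number of edges that can turn into "new boundary" pieces of the path partition is $O(\Psi_1)$). So all but at most $2\Psi_1 + \Lambda_{i+1}$ of these lost-freeness edges must have been \emph{coloured} at step S.2 with a colour that survived steps S.3--S.4. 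Along a $(c,d)$-alternating sub-path this forces a long alternating sequence of assigned colours from the lists $L_i(\cdot)$; since each uncoloured edge has exactly $L_i$ colours in its truncated list after S.1, the probability that a prescribed edge is assigned the one specific colour needed is $1/L_i$, and these assignments along a path are independent (edges of a path are pairwise distinct, and S.2 assigns colours independently across all edges). Hence the probability of any fixed such "loss pattern" is at most $L_i^{-(\lambda_i(C) - \Lambda_i/2 + 4\Psi_1)}$ — I would derive the exact exponent $\lambda_i(C) - \Lambda_i/2 + 4\Psi_1$ by carefully accounting for the $2\Psi_1$ reserved/boundary edges, the $\Lambda_{i+1}=2\Lambda_i+4\Psi_1$ surviving free edges, and the factor-$2$ loss from only getting one forced colour per alternating pair. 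Finally I would union bound over the at most $\Lambda_{i+1}$ choices for how many free edges survive and over at most $\lambda_i(C)^{2\Psi_1+\Lambda_{i+1}}$ choices for which edges play the "boundary" roles in the path partition, giving the stated bound
\[
\p{A^{\{c,d\}}_C} \le \Lambda_{i+1}\,\lambda_i(C)^{2\Psi_1+\Lambda_{i+1}}\,L_i^{-(\lambda_i(C)-\Lambda_i/2+4\Psi_1)}.
\]

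For the second, cleaner estimate, I would take $\log_\Delta$ of the bound just obtained. Using $L_i \ge \eps_1\Delta$ from Lemma~\ref{lem:i:star} the $L_i$-power contributes roughly $-(\lambda_i(C)-\Lambda_i/2+4\Psi_1)(1-o(1))$, while the polynomial factor $\Lambda_{i+1}\lambda_i(C)^{2\Psi_1+\Lambda_{i+1}}$ contributes only $O(\Psi_1^2\log_\Delta\lambda_i(C))$, which is negligible because $\Psi_1$ and $\Lambda_{i+1}$ depend only on $\eps$ while $\lambda_i(C)\ge\Lambda_i$ is comparable to $2k/2^{i-1}$, a large multiple of $\Psi_1$ once $g=g(\eps)$ is chosen large. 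Combining, $-\log_\Delta \p{A^{c,d}_C} \ge \lambda_i(C) - \Lambda_i/2 - O(1) \ge \lambda_i(C)/3$, where the last step uses $\Lambda_i \le \lambda_i(C)$ (so $\Lambda_i/2 \le \lambda_i(C)/2$) together with the slack from $\lambda_i(C)$ being large; one checks $\lambda_i(C) - \lambda_i(C)/2 - O(1) \ge \lambda_i(C)/3$ for $\lambda_i(C)$ large enough, again guaranteed by the choice of $g$.

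The main obstacle I anticipate is bookkeeping the path partition correctly: I must make sure that when free edges become coloured or reserved, the resulting constraint on $C$ being significant (multiplicity $\le\Psi_1$, $\le\Psi_1$ reserved edges) genuinely forces long runs of \emph{forced} colour assignments rather than merely many assignments, since only a forced assignment (a specific colour, probability $1/L_i$) can be used in the product bound. Pinning down exactly how many edges can be "wasted" on boundaries of the path partition — and confirming this is $O(\Psi_1)$, matching the $2\Psi_1$ and $4\Psi_1$ terms in the exponent — is the delicate combinatorial heart of the argument; the probabilistic estimate itself is then just independence of S.2 across distinct edges.
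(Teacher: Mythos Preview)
Your overall framework (union bound over ``loss patterns'', independence of the assignments at step S.2, probability $1/L_i$ per forced assignment) is the same as the paper's, but your mechanism for the first inequality contains a real gap. You allow a free edge to lose its $\{c,d\}$-freeness via removal of the witnessing colour at steps S.5/S.6 without the edge becoming coloured, and you budget $2\Psi_1$ edges for this. But such an edge cannot exist in the event $A_C^{\{c,d\}}$: a free edge is never reserved, so if it ends the iteration uncoloured and with neither $c$ nor $d$ in its list, it is no longer $\{c,d\}$-compatible and $C$ is not significant at iteration $i+1$. Hence \emph{every} free edge that stops being free must in fact retain a colour from $\{c,d\}$, and once the partition of $C$ into at most $\Psi_{i+1}$ alternating paths and their orientations are fixed, that colour is uniquely determined by the edge's position. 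This gives probability at most $L_i^{-(\lambda_i(C)-t)}$ for each choice of the $t<\Lambda_{i+1}$ surviving free edges, and the exponent $\lambda_i(C)-\Lambda_i/2+4\Psi_1$ is nothing more than $\lambda_i(C)-\Lambda_{i+1}$ rewritten via the arithmetic identity $\Lambda_i/2-\Lambda_{i+1}=\Psi_1(2i+4)\ge 4\Psi_1$. Your ``factor-$2$ loss from only getting one forced colour per alternating pair'' is spurious: every lost edge contributes a full factor $1/L_i$, and if you actually carried out your budgeted count (losing $2\Psi_1+\Lambda_{i+1}$ edges to non-forced transitions) you would obtain an exponent $\lambda_i(C)-2\Lambda_{i+1}-2\Psi_1$, which is strictly weaker than the target.

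For the second inequality you propose to take $\log_\Delta$ of the first bound directly; this can be made to work, but the term $(2\Psi_1+\Lambda_{i+1})\log_\Delta\lambda_i(C)$ requires care, since $\Lambda_{i+1}$ is comparable to $\Lambda_i$ and hence to $\lambda_i(C)$ in the worst case. The paper instead returns to the sum $\sum_{t\le\Lambda_{i+1}}\binom{\lambda_i(C)}{t}L_i^{-(\lambda_i(C)-t)}$ and bounds the binomial sum crudely by $2^{\lambda_i(C)}$, yielding $\p{A_C^{\{c,d\}}}\le \lambda_i(C)^{2\Psi_1}2^{\lambda_i(C)}L_i^{-\lambda_i(C)/2}$; taking logarithms then gives the clean estimate $\tfrac{\lambda_i(C)}{2}(1+\log_\Delta(\eps_1/4))-2\Psi_1\log_\Delta\lambda_i(C)\ge\lambda_i(C)/3$ without any competition between $\Lambda_{i+1}$ and $\lambda_i(C)$.
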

\begin{proof}
Recall that a cycle \(C\) is \( \{c,d\}\)-significant at
the \(i\)-th iteration, if at the beginning of
the $i$-th iteration, \(C\) is \( \{c,d\}\)-compatible, has $\{c,d\}$-multiplicity at most
\(\Psi_i\) and contains at most \(\Psi_{i}\)  \(
\{c,d\}\)-reserved edges. 
    If a cycle \(C\) is \( \{c,d\}\)-significant at the \( (i+1)\)-st iteration, it must be \(
    \{c,d\}\)-significant at the \(i\)-th iteration. Thus by property \(P(i-1)\), \(C\) contains at least \(\Lambda_{i}\)
    free edges at the beginning of the \(i\)-th iteration; that is, \(\lambda_{i}(C) \ge \Lambda_{i}\).
    Moreover, \(C\) contains at most \(\Psi_{i}\) reserved edges.
    For \(C\) to stay significant, after the \(i\)-th
    iteration $C$ must have multiplicity at most \(\Psi_{i+1}\). 
 This implies that the cycle $C$ can be partitioned into at most \(\Psi_{i+1}\)
        paths that are either $(c,d)$-alternating or $(d,c)$-alternating. 
%
    Thus there are at most \(\lambda_i(C)^{\Psi_{i+1}}\) ways to partition the
        free edges into \(\Psi_{i+1}\) groups according to which alternating path they belong to.
        Once this partition is decided, for each such group one should choose if they form a  \( (c,d)\)-alternating path or a \( (d,c)\)-alternating.
        Using that $\lambda_i(C)\geq 2$ and that $\Psi_{i+1}\leq \Psi_1$, it follows that there are at most \((2\lambda_{i}(C))^{\Psi_{i+1}} \le \lambda_{i}(C)^{2 \Psi_1}\) ways to decide
        which edge of $C$ should have colour \(c\) or \(d\) such that \(A_{C}^{\{c,d\}}\) holds.

    Another condition for \(C\) to keep being significant is that no reserved edge in \(C\) gets
    coloured. As only an upper bound of \(\p{A_{C}^{\{c,d\}}}\) is needed, we can assume this
    condition is satisfied.

Thus we can upper bound the probability of $A^{\{c,d\}}_{C}$ using a union bound over
all the possible sets of free edges of size at most $\Lambda_{i+1}$ and over the at most
    \( \lambda_{i}(C)^{2\Psi_{1}}\) possible ways to
    partition the \(\lambda_{i}(C)\) free edges into alternating paths.
Recall that at the beginning of the $i$-th iteration the cycle $C$ has $\lambda_i(C)$ free
edges, and that the list of each uncoloured edge has size exactly $L_i$. Also recall that
$\Lambda_{i}:=2k/2^{i-1}-4 \ctn i$ and that \(\Psi_i:= 4^{2+i^{*}-i}\).
We have
\begin{align*}
    \p{A^{\{c,d\}}_{C}}
    & {\leq}
    \lambda_{i}(C)^{2\Psi_1} \sum_{t=0}^{\Lambda_{i+1}}
    \binom{\lambda_i(C)}{t} L_i^{-(\lambda_i(C)-t)}\\
    &
    {\leq}
   \Lambda_{i+1}
    \lambda_i(C)^{2\Psi_1+\Lambda_{i+1}} L_i^{-(\lambda_i(C)-\Lambda_{i+1})} \\
    &
    {=}
    \Lambda_{i+1}
    \lambda_i(C)^{2\Psi_1+\Lambda_{i+1}}
    L_i^{-(\lambda_i(C)-\Lambda_{i}/2)-(\Lambda_{i}/2-\Lambda_{i+1})}
    \\
    &
    {\leq}
    \Lambda_{i+1}
    \lambda_i(C)^{2\Psi_1+\Lambda_{i+1}}
    L_i^{-(\lambda_i(C)-\Lambda_{i}/2)-\Psi_{1}\left( 2i +4 \right)}
    \\
    &
    {\leq}
    \Lambda_{i+1}
    \lambda_i(C)^{2\Psi_1+\Lambda_{i+1}}
    L_i^{-(\lambda_i(C)-\Lambda_{i}/2+4 \Psi_1)}
    \;.
\end{align*}
This proves the first part of the lemma. For the second part, using $\sum_{i=0}^b \binom{a}{i}\leq 2^a$, we obtain,
\begin{align*}
    \p{A^{\{c,d\}}_{C}}
    &
    {\leq}
    \lambda_i(C)^{2\Psi_1} 2^{\lambda_i(C)}
    L_i^{-(\lambda_i(C)-\Lambda_{i}/2)} \\
    &
    {\leq}
    \lambda_i(C)^{2\Psi_1} 2^{\lambda_i(C)}
    L_i^{-(\lambda_i(C)/2)}\;,
\end{align*}
where we used that by induction hypothesis $P(i-1)$ holds, and thus $\lambda_i(C)\geq\Lambda_i$.

Recall that $L_i\geq \eps_1 \Delta$ by Lemma~\ref{lem:i:star}. Hence,
\begin{align*}
-\log_\Delta{\p{A_C^{c,d}}} &\geq \frac{\lambda_i(C)}{2}\cdot \left(\log_\Delta{L_i}-2\log_\Delta{2}\right) - 2\Psi_1\log_\Delta{\lambda_i(C)} \\
&\geq \frac{\lambda_i(C)}{2}\cdot \left(1 + \log_\Delta{(\eps_1/4})\right) - 2\Psi_1\log_\Delta{\lambda_i(C)} \\
&\geq \frac{\lambda_i(C)}{3}\;,
\end{align*}
provided that $\d$ and $g$ (and thus, $k$) are large enough with respect to $\eps$.
\end{proof}

We will use the following modified version of the Weighted Lov\'asz Local Lemma~(see, e.g.,~\cite[p.~221]{Molloy2002graph})
to lower bound $\p{\cap_{E \in \cE} \overline{E}}$. Its proof can be {easily} derived from the general version of the
local lemma~(see, e.g.,~\cite[p.~222]{Molloy2002graph}), and we include it here for the sake of
completeness:
{
\begin{lem}[General Lov\'asz Local Lemma]
    Consider a set $\cE = \{E_1 ,
    \ldots ,E_M\}$ of events such that each $E_r$ is mutually independent of
    $\cE\setminus (\cD_r \cup \{E_r\})$, for some $\cD_r \subseteq \cE$. If there
    exist constants $x_1,\ldots,{x_M} \in [0,1)$ such that for each $1 \le r \le M$
$$
\p{E_r}\leq x_r  \prod_{E_s \in \cD_r} (1-x_s)\;,
$$
    then with positive probability, none of the events in $\cE$ occur.
\end{lem}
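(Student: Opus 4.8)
The plan is to give the standard inductive proof of the asymmetric Lov\'asz Local Lemma. The heart of the matter is the auxiliary claim that for every index $r$ and every $S \subseteq \{1,\dots,M\}$ with $r \notin S$ one has $\p{E_r \mid \bigcap_{s\in S}\overline{E_s}} \le x_r$, where all conditionings are implicitly on events of positive probability. Granting this claim, the lemma follows by the chain rule:
\[
\p{\bigcap_{r=1}^M \overline{E_r}} = \prod_{r=1}^M \bigl(1 - \p{E_r \mid \bigcap_{s<r}\overline{E_s}}\bigr) \ge \prod_{r=1}^M (1-x_r) > 0,
\]
where the last step uses $x_r \in [0,1)$ for all $r$.

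I would prove the claim by strong induction on $|S|$, bundling into the same induction the positivity of the conditioning events $\p{\bigcap_{s\in S}\overline{E_s}}>0$ (which itself falls out, since adjoining one more $\overline{E_s}$ multiplies the probability by a factor at least $1-x_s>0$ by the claim one level down). The base case $S=\emptyset$ is immediate from the hypothesis $\p{E_r}\le x_r\prod_{E_s\in\cD_r}(1-x_s)\le x_r$. For the inductive step I split $S$ into $S_1=\{s\in S: E_s\in\cD_r\}$ and $S_2=S\setminus S_1$, and write $\p{E_r\mid\bigcap_{s\in S}\overline{E_s}}$ as the ratio of $\p{E_r\cap\bigcap_{s\in S_1}\overline{E_s}\mid\bigcap_{s\in S_2}\overline{E_s}}$ over $\p{\bigcap_{s\in S_1}\overline{E_s}\mid\bigcap_{s\in S_2}\overline{E_s}}$. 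The numerator is at most $\p{E_r\mid\bigcap_{s\in S_2}\overline{E_s}}=\p{E_r}\le x_r\prod_{E_s\in\cD_r}(1-x_s)$, because $E_r$ is mutually independent of $\cE\setminus(\cD_r\cup\{E_r\})$ and $S_2$ indexes only such events. The denominator is at least $\prod_{s\in S_1}(1-x_s)$ by a short telescoping argument: enumerate $S_1$, expand the conditional probability as a product of one factor per element, and apply the induction hypothesis to each factor (every conditioning set appearing there has size strictly less than $|S|$, provided $S_1\neq\emptyset$; if $S_1=\emptyset$ the whole step is trivial by independence). Dividing and using $\{E_s: s\in S_1\}\subseteq\cD_r$ together with $1-x_s\le 1$ leaves exactly the bound $x_r$, completing the induction.

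I do not expect a genuine obstacle: this is a textbook form of the Local Lemma and the argument is pure bookkeeping. The only point that deserves a little care is keeping the positivity of every conditioning event inside the same strong induction, so that none of the conditional probabilities written down is ill-defined; this is routine. A reader who prefers to skip even that can instead invoke directly the general Local Lemma cited as \cite[p.~222]{Molloy2002graph} and observe that the hypothesis $\p{E_r}\le x_r\prod_{E_s\in\cD_r}(1-x_s)$ is precisely the one required there.
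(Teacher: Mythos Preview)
Your proposal is correct: it is the standard inductive proof of the asymmetric Local Lemma, and the bookkeeping you outline (splitting $S$ into $S_1\subseteq\cD_r$ and $S_2$, bounding numerator by independence and denominator by the inductive hypothesis, and folding positivity of the conditioning events into the same induction) goes through without difficulty.

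As for comparison with the paper: the paper does not actually prove this lemma. It is stated there as a known result, with a reference to \cite[p.~222]{Molloy2002graph}, and is then used as a black box to derive the Alternative Weighted Local Lemma (Lemma~\ref{lem:local_lemma}). So your proof is not an alternative to the paper's --- it simply supplies what the paper omits. Your closing remark, that one may instead invoke the cited reference directly, is exactly what the paper does.
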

}
\begin{lem}[Alternative version of the Weighted Lov\'asz Local Lemma] \label{lem:local_lemma}
Let $\beta > 0$ be a
    constant and let {$\nu>0$ be} such that $\beta = \expo^{2\log{2}/\nu}$.
    Consider a set $\cE = \{E_1 ,
    \ldots ,E_M\}$ of events such that each $E_r$ is mutually independent of
    $\cE\setminus   (\cD_r \cup \{E_r\})$, for some $\cD_r \subseteq \cE$. If there
    exist {constants} $w_1,\ldots,{w_M} \ge 1$ and {$h_1,\ldots, h_M$ such that}
    \vspace{-2mm}
    \begin{enumerate}[(a)]
        \item $\p{E_r} \leq h_r$;
        \item $\sum_{E_s \in \cD_r}  \beta^{w_s} h_s \le  {w_r}/{\nu}$;
        \item $\beta^{w_r} h_r \le  1/2$;
    \end{enumerate}

    then with positive probability, none of the events in $\cE$ occur.
\end{lem}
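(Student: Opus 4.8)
The plan is to deduce this from the general Lov\'asz Local Lemma stated immediately above, by producing a suitable choice of the parameters $x_r$. The guess I would make and then verify is
$$
x_r := \beta^{w_r} h_r, \qquad 1 \le r \le M.
$$
First I would check these are admissible for the general local lemma, i.e.\ $x_r \in [0,1)$: nonnegativity is clear since $h_r \ge \p{E_r} \ge 0$, while $x_r = \beta^{w_r} h_r \le 1/2 < 1$ is exactly hypothesis~(c).

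The one nontrivial ingredient is the elementary inequality $1 - x \ge 4^{-x}$, valid for all $x \in [0,1/2]$. I would justify it by noting that $x \mapsto \log(1-x)$ is concave on $[0,1/2]$ and coincides with the linear map $x \mapsto -x\log 4$ at both endpoints (since $\tfrac12\log 4 = \log 2$), so the concave function lies above this chord throughout $[0,1/2]$; exponentiating gives the claim. Because every $x_s$ satisfies $x_s \le 1/2$ by~(c), this yields
$$
\prod_{E_s \in \cD_r}(1 - x_s) \ \ge\ \prod_{E_s \in \cD_r} 4^{-x_s} \ =\ 4^{-\sum_{E_s \in \cD_r}\beta^{w_s}h_s} \ \ge\ 4^{-w_r/\nu},
$$
where the last step uses hypothesis~(b) together with the fact that $t \mapsto 4^{-t}$ is decreasing.

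It remains to assemble the pieces. Since $\beta = \expo^{2\log 2/\nu} = 4^{1/\nu}$, we have $\beta^{w_r} = 4^{w_r/\nu}$, and therefore
$$
x_r \prod_{E_s \in \cD_r}(1 - x_s) \ \ge\ \beta^{w_r} h_r \cdot 4^{-w_r/\nu} \ =\ h_r \ \ge\ \p{E_r},
$$
the last inequality being hypothesis~(a). Thus the hypothesis of the general Lov\'asz Local Lemma holds with these $x_1,\dots,x_M$, and we conclude that with positive probability none of the events in $\cE$ occurs. I do not expect any genuine obstacle here: the whole content is in choosing $x_r = \beta^{w_r}h_r$ and in the bound $1-x \ge 4^{-x}$; the remaining manipulations are bookkeeping, and one can check that the hypothesis $w_r \ge 1$ is not even needed for this particular argument.
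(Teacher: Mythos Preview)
Your proof is correct and is essentially identical to the paper's: both set $x_r = \beta^{w_r} h_r$, use the inequality $1-x \ge 4^{-x}$ on $[0,1/2]$ (the paper writes it as $1-x \ge \expo^{-\alpha x}$ with $\alpha = 2\log 2$, which is the same thing), and then verify the hypothesis of the general Local Lemma by the same chain of inequalities.
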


\begin{proof}
    Let $x_r := \beta^{w_r} h_r$. By assumption $x_r \le 1/2$. Thus $1-x_r \ge
\expo^{-\alpha x_r}$, where $\alpha = 2\log{2}$. Observe also that $\beta=\expo^{\alpha/\nu}$.
    Therefore
    \begin{align*}
        x_r \prod_{E_s \in \cD_r} (1-x_s)
        & \ge x_r \exp\left\{-\alpha \sum_{E_s \in \cD_r} x_s  \right\} \\
        & = \beta^{w_r} h_r \exp \left\{ -\alpha \sum_{E_s \in \cD_r}
        \beta^{w_s}h_s \right\} \\
        & \ge \beta^{w_r} h_r \exp \left\{ - \frac {\alpha w_r} \nu
    \right\} \\
        & = h_r \ge \p{E_r}.
    \end{align*}
    Thus the lemma follows from the General Lov\'asz Local Lemma.
\end{proof}

%

For every $\lambda\geq 0$, every edge $e$ and every pair of colours $\{c,d\}$, let \(\cC^{\{c,d\}}(e,\lambda)\) be the set of \( \{c,d\}\)-significant cycles at the \(i\)-th
iteration that contain the edge $e$, that contain exactly \(\lambda\) free edges, and
that can possibly remain significant {at the \((i+1)\)-st iteration}.
Let $\cC^{\{c,d\}}(e)=\cup_{\lambda\geq 0} \cC^{\{c,d\}}(e,\lambda)$.
The following lemma controls the contribution of the probability of the events $A^{\{c,d\}}_C$ for all
\(C \in \cC^{\{c,d\}}(e)\).

\begin{lem}
\label{lem:cycle:incremental}
    Suppose that $P(i-1)$ holds. Then, for every pair of colours $\{c,d\}$ and every free \(c\)-compatible
    edge $e$,
\begin{align}\label{eq:for_the_LLL}
\sum_{C\in \cC^{\{c,d\}}(e)} \left(1+\frac{\eps}{10}\right)^{\lambda_i(C)/2} \p{A^{\{c,d\}}_{C}} &=
O(\d^{-7})\;.
\end{align}
\end{lem}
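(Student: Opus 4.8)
The plan is to estimate the left-hand side of~\eqref{eq:for_the_LLL} by combining two ingredients: a structural count of how many $\{c,d\}$-significant cycles through $e$ have exactly $\lambda$ free edges, and the bound on $\p{A_C^{\{c,d\}}}$ coming from Lemma~\ref{lem:bounds2}. First I would fix $\lambda$ and bound $|\cC^{\{c,d\}}(e,\lambda)|$. A cycle $C$ counted here is $\{c,d\}$-compatible with multiplicity at most $\Psi_i$ and at most $\Psi_i$ reserved edges, so it is essentially a union of at most $\Psi_i\le\Psi_1$ alternating subpaths; along each alternating subpath, once its first edge and its colour-orientation are chosen, the edges alternate between $c$-compatible and $d$-compatible, and at each vertex reached there are at most $T_i\le T_1=\Delta$ continuations with the required colour in the list (or $R_i$ reserved continuations), but the total number of reserved edges is capped by $\Psi_1$. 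Using the large-girth hypothesis — the cycle has length at least $g=2k$ and hence (by $P(i-1)$ and the fact that reserved edges are few) at least $\Omega(\lambda)$ free edges that lie on the alternating structure — one gets a bound of the shape $|\cC^{\{c,d\}}(e,\lambda)|\le \Delta^{c_0\Psi_1}\cdot\Delta^{\lambda-\Lambda_i/2+O(\Psi_1)}$ for the number of such cycles, where the exponent of $\Delta$ comes from the free edges one is free to route (the reserved edges contribute only a bounded factor since there are at most $\Psi_1$ of them and each has at most $R_i\le\eps\Delta/2$ choices). The key point is that the exponent of $\Delta$ in this count is roughly $\lambda-\Lambda_i/2$, which is exactly balanced against the $L_i^{-(\lambda_i(C)-\Lambda_i/2+4\Psi_1)}$ factor in Lemma~\ref{lem:bounds2}, since $L_i\ge\eps_1\Delta$ and $\log_\Delta L_i=1+o(1)$.

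Next I would multiply the count by the probability bound and the weight. Writing $\lambda=\lambda_i(C)$, for a cycle with $\lambda$ free edges we have, by Lemma~\ref{lem:bounds2},
\begin{align*}
\left(1+\tfrac{\eps}{10}\right)^{\lambda/2}\p{A_C^{\{c,d\}}} &\le \left(1+\tfrac{\eps}{10}\right)^{\lambda/2}\Lambda_{i+1}\,\lambda^{2\Psi_1+\Lambda_{i+1}}\,L_i^{-(\lambda-\Lambda_i/2+4\Psi_1)}.
\end{align*}
Summing over $C\in\cC^{\{c,d\}}(e,\lambda)$ and then over $\lambda\ge\Lambda_i$ (the range is legitimate since by $P(i-1)$ every significant cycle has $\lambda_i(C)\ge\Lambda_i$), the $\Delta^{\lambda-\Lambda_i/2+O(\Psi_1)}$ from the cycle count cancels against $L_i^{-(\lambda-\Lambda_i/2+O(\Psi_1))}$ up to a factor $(L_i/\Delta)^{-\lambda}=(1+o(1))^{-\lambda}$, which — crucially — beats the weight $(1+\eps/10)^{\lambda/2}$ once $\Delta$ is large enough with respect to $\eps$, because $\log_\Delta L_i\to 1$ makes $(L_i/\Delta)$ arbitrarily close to $1$ while $(1+\eps/10)^{1/2}$ is a fixed constant larger than $1$; here one uses the second bound of Lemma~\ref{lem:bounds2}, $-\log_\Delta\p{A_C^{\{c,d\}}}\ge\lambda_i(C)/3$, to get an honest geometric decay in $\lambda$ that dominates both the polynomial factor $\lambda^{2\Psi_1+\Lambda_{i+1}}$ and the weight. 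So the sum over $\lambda$ is a convergent geometric-type series whose value is at most $\Delta^{-\Lambda_i/6}$ times a constant depending on $\eps$; since $\Lambda_i\ge\Lambda_{i^*+1}$ can be made $\ge 48$ (say) by taking $g=g(\eps)$ large, this is $O(\Delta^{-7})$, proving the lemma. (One must also note the leading polynomial factor $\Delta^{O(\Psi_1)}$ from the bounded reserved edges and the $\lambda^{2\Psi_1+\Lambda_{i+1}}=\Delta^{o(1)\cdot\text{poly}}$ factor are absorbed because $\Psi_1$, $\Lambda_{i+1}$ and $i^*$ depend only on $\eps$, whereas $k$ — hence $\Lambda_i$ — can be chosen much larger.)

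The main obstacle I anticipate is the structural counting step: getting the exponent of $\Delta$ in $|\cC^{\{c,d\}}(e,\lambda)|$ down to $\lambda-\Lambda_i/2+O(\Psi_1)$ rather than something like $\lambda$. This is where the large-girth hypothesis and the bookkeeping of free versus reserved edges really enter: the cycle is determined by the sequence of its edges, but along a $(c,d)$-alternating path the $c$-compatible edges that already carry colour $c$ are forced, and more importantly, because $C$ has at most $\Psi_1$ reserved edges and at least $\Lambda_i$ free edges (so $\lambda\ge\Lambda_i$), roughly half of the "choices'' — those corresponding to the portion of length $\Lambda_i/2$ forced by needing the cycle to close up and to remain significant — are not actually free, cutting the exponent by $\Lambda_i/2$. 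Making this precise requires carefully exploiting that significance at iteration $i+1$ forces multiplicity $\le\Psi_{i+1}$ and hence a rigid alternating structure, and that closing a cycle of girth $\ge 2k$ removes $\Omega(k)$ degrees of freedom; I would handle it by encoding a cycle in $\cC^{\{c,d\}}(e,\lambda)$ as (the partition into $\le\Psi_1$ alternating paths) $\times$ (the starting edge and orientation of each path) $\times$ (the free-edge choices along each path, of which there are $L_i\le(1+o(1))\Delta$ per free edge), and then observing that the last factor contributes $\Delta^{(1+o(1))(\lambda-\Lambda_i/2)}$ once the constraint that the paths concatenate into a closed walk of the right structure is imposed. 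Everything else — the geometric summation, absorbing $\eps$-dependent constants, and the final $O(\Delta^{-7})$ — is routine once that count is in hand.
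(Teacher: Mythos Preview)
Your proposal has two genuine gaps, both in the cycle-counting step.

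First, the mechanism you invoke for the saving in the exponent is wrong. You write that ``closing a cycle of girth $\ge 2k$ removes $\Omega(k)$ degrees of freedom'' and that imposing ``the constraint that the paths concatenate into a closed walk'' cuts the exponent from $\lambda$ to roughly $\lambda-\Lambda_i/2$. Neither is justified; closing a walk saves only a single factor of $\Delta$, and the girth is not what enters. The paper's argument is structurally different: starting from $v$ (where $e=uv$) one grows, inside the graph of $\{c,d\}$-compatible edges, all paths having at most $\Lambda_{i+1}$ free edges, multiplicity at most $\Psi_{i+1}$ and at most $\Psi_{i+1}$ reserved edges. The resulting subgraph $H$ is a \emph{tree}: if two such paths met, their union would be a cycle that is $\{c,d\}$-significant at iteration $i$ with at most $2\Lambda_{i+1}+3\le\Lambda_i$ free edges, contradicting $P(i-1)$. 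Each $C\in\cC^{\{c,d\}}(e,\lambda)$ therefore contains a unique path $P\subset H$ from $v$ to a leaf with exactly $\Lambda_{i+1}$ free edges, and since $H$ is a tree, $C$ is determined by the remaining path $C\setminus P$ from $u$, which has only $\lambda-\Lambda_{i+1}$ free edges. That is where the exponent saving comes from; it is a consequence of $P(i-1)$, not of girth directly.

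Second, and more fatally, the cancellation you describe does not occur. You bound free-edge continuations by $\Delta$ (or by $L_i$) and assert $(L_i/\Delta)^{-\lambda}=(1+o(1))^{-\lambda}$ because ``$\log_\Delta L_i\to 1$''. But $\log_\Delta L_i\to 1$ only says $L_i=\Delta^{1-o(1)}$; by Lemma~\ref{lem:i:star} one merely has $L_i\ge\eps_1\Delta$ with $\eps_1=\eps_1(\eps)$ small, so $(\Delta/L_i)^{\lambda}$ can be as large as $\eps_1^{-\lambda}$, which overwhelms both the weight $(1+\eps/10)^{\lambda/2}$ and the $\Delta^{-\lambda/3}$ from the second part of Lemma~\ref{lem:bounds2}. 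The correct base for the count is $T_i$: at each vertex there are at most $t_i(v,c)\le T_i$ free $c$-compatible continuations, so the tree argument yields $|\cC^{\{c,d\}}(e,\lambda)|\le 2^{\Psi_{i+1}}\lambda^{4\Psi_1}T_i^{\,\lambda-\Lambda_{i+1}}\Delta^{\Psi_1}$. The decisive ratio is then $T_i/L_i$, and Lemma~\ref{lem:L:T:close} gives $L_i/T_i\ge 1+\eps/10$, so $(T_i/L_i)^{\lambda-\Lambda_{i+1}}\le(1+\eps/10)^{-(\lambda-\Lambda_{i+1})}$; since $\lambda-\Lambda_{i+1}\ge\lambda/2$ for $\lambda\ge\Lambda_i$, this dominates the weight and the sum over $\lambda$ converges, leaving a prefactor $\Delta^{\Psi_1}/L_i^{4\Psi_1}\le\eps_1^{-4\Psi_1}\Delta^{-3\Psi_1}=O(\Delta^{-7})$.
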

\begin{proof}



    We count the number of cycles that could possibly remain \( \{c,d\}\)-significant after the \(i\)-th iteration and that contain a given {free} edge
$e=uv$, by constructing them from the edge $e$. At each step of the
construction, we consider how many of the unused edges can extend the current cycle. We will use the
fact that, since $P(i-1)$ holds, every such cycle has many free edges.

Consider the graph $G'$ obtained from $G$ by keeping all the edges that participate in cycles
    that could be $\{c,d\}$-significant at the $(i+1)$-st iteration. In particular, all these edges are $\{c,d\}$-compatible.
We are going to construct a subgraph $H\subseteq G'$ rooted at $v$. Let $V_0:=\{v\}$ and
let $H_0:=G'[V_0]$ be the empty subgraph formed by $v$. We construct $H_{j+1}:=G'[V_{j+1}]$, with
$V_{j+1}:=V_j\cup W_{j+1}$, where $W_{j+1}$ is the set of vertices {$w\in V(G)\setminus V_j$} that
satisfy
\begin{enumerate}[(a)]
    \item $w$ is at distance \(1\) from \(W_{j}\) in $G' \setminus e$, and

    \item there exists a  path in $G' \setminus e$
        connecting \(v\) to \(w\) 
        using exactly one vertex in each of \(W_{0},\ldots, W_{j}\)
        that contains:
        \begin{enumerate}[({b}.1)]
            \item at most $\Lambda_{i+1}$ free edges,
            \item has $\{c,d\}$-multiplicity at most \(\Psi_{i+1}\), and
            \item at most \(\Psi_{i+1}\) \( \{c,d\}\)-reserved edges.
        \end{enumerate}
\end{enumerate}

\noindent Let $\tau$ be the smallest integer such that {$W_{\tau+1}=\emptyset$} and consider $H:=G'[V_\tau]$.

\begin{claim}
For every $0\leq j < \tau$, each vertex in $W_{j+1}$ has a unique neighbour in $H_{j+1}$ which belongs to $W_j$. In particular, the subgraph $H$ is a rooted tree.
\end{claim}
\begin{proof}
Equivalently, we want to show that for every vertex $w\in H$ there is a unique path in \(H\) to the root $v$.
For the sake of contradiction, suppose that there exists a cycle {in $H$}. Let $j$ be the smallest integer
such that $H_j$ contains at least one cycle. 

Adding \(W_{j}\) can only create a cycle in one of two ways: either (1) there exists \(w \in W_{j}\) such that \(w\) is
connected two vertices in \(H_{j-1}\), or (2) there exist \(w_1, w_2 \in W_{j}\) such that there is a path from \(w_1\) to \(w_2\) in \(G'[W_j]\). If (1) occurs, we can find a cycle \(D\) in \(H_{j}\) that consists of two internally-disjoint 
paths \(p_1\) and \(p_2\) contained
in \(H_{j-1}\), and the vertex $w$.
If (1) does not occur, for every $w\in W_j$, there exists a unique path in $H_{j-1}$ that connects
the unique neighbour of $w$ in $W_{j-1}$ to $v$. Therefore, we can assume that $w_1w_2$ is an edge
of $G'$ (otherwise take any of the edges in the path between $w_1$ and $w_2$ in $G'[W_j]$) and we
obtain a cycle $D$ formed by  two internally-disjoint paths $p_1$ and $p_2$ contained in
\(H_{j-1}\) and the vertices $w_1$ and $w_2$.


%

By our construction, \(p_1\) and \(p_2\) have at most one vertex in each set $W_i$, for $i<j$, and
thus, each of them has fewer than \(\Lambda_{i+1}\) free edges, multiplicity at most
\(\Psi_{i+1}\) and at most \(\Psi_{i+1}\) reserved edges.  Since $D$ is composed of $p_1$, $p_2$ and
at most three extra edges (the one connecting $p_i$ to $w_i$, $i\in \{1,2\}$ and $w_1 w_2$), it must
have at most \(2\Lambda_{i+1} + 3 \le \Lambda_{i}\) free edges, multiplicity at most \(2\Psi_{i+1} +
    3 \le \Psi_{i}\) and at most \(2\Psi_{i+1} + 3 \le \Psi_{i}\) reserved edges.  Thus $D$ was
$\{c,d\}$-significant at the \(i\)-th iteration but contains fewer than $\Lambda_{i}$ free edges,
thus giving a contradiction with the fact that every \( \{c,d\}\)-significant cycle has at least
$\Lambda_i$ free edges, since $P(i-1)$ is satisfied. Hence, there exists a unique path in $H$ from any vertex $w$ to the root $v$. This implies that every vertex in $W_{j+1}$ has a unique neighbour in $H_{j+1}$ (which belongs to $W_j$) and that $H$ is acyclic.
\end{proof}

Recall that for a cycle $C$ to be able to be \( \{c,d\}\)-significant at the \( (i+1)\)-st iteration,
that is \(C\in\cC^{\{c,d\}}(e)\), it must satisfy the following three properties:
\begin{itemize}
\item[-] $C$ is \(\{c,d\}\)-significant at the \(i\)-th iteration,
\item[-] $C$ has $\{c,d\}$-multiplicity at most \(\Psi_{i+1}\): during an iteration the multiplicity of a cycle can only increase, and
\item[-] $C$ contains at most \(\Psi_{i+1}\) $\{c,d\}$-reserved edges: the number of reserved edges only decreases when one of them retains a colour
different than $c$ or $d$, and in such case, the edge is neither $c$- nor $d$-compatible, and the $C$ is no longer \( \{c,d\}\)-significant.
\end{itemize}
%

By construction of $G'$, if \(C \in \cC^{\{c,d\}}(e)\), then $C$ is contained in \(G'\).  Since $C$
contains the edge $e$ and since $H$ is acyclic, there exists a simple path $P\subset C$ in $H$ from
\(v\) to a leaf \(w'\) of $H$ such that $P$ intersects each set $W_i$ exactly once. (Observe that
$C$ might intersect $H$ in a very complicated way, but the path $P$ has a simple structure.) As we
explained before, if \(C\in \cC^{\{c,d\}}(e)\), then it has $\{c,d\}$-multiplicity at most
\(\Psi_{i+1}\) and contains at most \(\Psi_{i+1}\) $\{c,d\}$-reserved edges. Fix $C$ and create $P$
at the same time we construct $H$. Since conditions (b.2) and (b.3) are always satisfied by the
properties of $C$, during the construction of \(H\) the path $P$ can only stop growing because
condition (b.1) is violated, which implies that $P$ contains exactly \(\Lambda_{i+1}\) free edges.
(The endpoint \(w'\) of \(P\) must be a leaf in \(H\) since no neighbour of \(w'\) can be
    added to \(H\) any more.) Thus, $C\setminus P$ must be a path that contains exactly
\(\lambda_{i}(C)-\Lambda_{i+1}\) free edges,  has multiplicity at most \(\Psi_{i+1}\) and contains
at most \(\Psi_{i+1}\) reserved edges.

Let \(P_u(\lambda_f,\lambda_r,\lambda_s)\) be the set of paths starting from \(u\) and containing
\(\lambda_f\) free edges, \(\lambda_r\) reserved edges and multiplicity \(\lambda_s\). There are
\(\binom{\lambda_f+\lambda_r}{\lambda_r} \le (\lambda_f+\lambda_r)^{\lambda_r}\) ways to select
which edges are reserved and which ones are free in the path. Once this is fixed, there are at most
\( (2(\lambda_f + \lambda_r))^{\lambda_s}\) ways to partition free and reserved edges into groups according to which alternating path they belong to.  Given the previous selections, there are at most
\(T_{i}^{\lambda_f}\Delta^{\lambda_r}\) paths starting from \(u\) that agree with them: a vertex
can be incident either to one coloured edge, to at most \(T_{i}\) free edges or to at most
\(\Delta\) reserved edges~(the colour corresponding to these edges is either $c$ or $d$
depending on their distance to \(e\)).  Therefore, summing over all the possible selections, we have
\(|P_u(\lambda_f,\lambda_r,\lambda_s)| \le 2^{\lambda_s}(\lambda_f+\lambda_r)^{\lambda_r+\lambda_s}
T_{i}^{\lambda_f} \Delta^{\lambda_r}\).

%
Thus, the number of candidate paths $P$ that are induced by some cycle \(C \in \cC^{\{c,d\}}(e, \lambda)\) is at most
\begin{align*}
    \sum_{\lambda_r = 0}^{\Psi_{i+1}}
    \sum_{\lambda_s = 0}^{\Psi_{i+1}}
    \left|P_u\left(\lambda-\Lambda_{i+1},\lambda_r,\lambda_s\right)\right|
    &
    \le
    \sum_{\lambda_r = 0}^{\Psi_{i+1}}
    \sum_{\lambda_s = 0}^{\Psi_{i+1}}
    2^{\lambda_s}
    \left(\lambda-\Lambda_{i+1}+\lambda_r\right)^{\lambda_r+\lambda_s}
    T_{i}^{\lambda-\Lambda_{i+1}} \Delta^{\lambda_r}
    \\
    &
    \leq
    2^{\Psi_{i+1}}
    \lambda^{4\Psi_{i+1}}
    T_{i}^{\lambda-\Lambda_{i+1}}
    \Delta^{\Psi_{i+1}}
    \leq
   2^{\Psi_{i+1}}
    \lambda^{4\Psi_{1}}
    T_{i}^{\lambda-\Lambda_{i+1}}
    \Delta^{\Psi_1}
    ,
\end{align*}
where we used that $\Lambda_{i+1}\geq \Psi_{i+1}\geq \lambda_r$, since $k$ is large enough with respect to $\eps$ (and thus, to $i^*$).

This is also an upper bound on \(|\cC^{\{c,d\}}(e,\lambda)|\): since $C\setminus P$ must start at $u$ and
end at a leaf of \(H\), once this leaf is determined and	 since $H$ is a tree, \(C\) is also determined.

Recall that if a cycle $C$ is not $\{c,d\}$-significant, then $\p{A_C^{\{c,d\}}}=0$, and it does not contribute to the final sum.
By Lemma~\ref{lem:i:star}, $L_i\geq \eps_1\Delta$ and by Lemma~\ref{lem:L:T:close},
$L_i/T_i\geq 1+\eps/10$.
Using the upper bound obtained in Lemma~\ref{lem:bounds2} we have,
\begin{align*}
    \sum_{C\in \cC^{\{c,d\}}(e)} \left(1+\frac{\varepsilon}{10}\right)^{\lambda_i(C)/2} \p{A^{\{c,d\}}_{C}}
    &=
    \sum_{\lambda\geq \Lambda_i}
    \sum_{C\in \cC^{\{c,d\}}(e,\lambda)}\left(1+\frac{\eps}{10}\right)^{\lambda/2} \p{A^{\{c,d\}}_{C}}\\
    &{\leq}
    \sum_{\lambda\geq \Lambda_i}\left(1+\frac{\eps}{10}\right)^{\lambda/2} |\cC^{\{c,d\}}(e,\lambda)| \Lambda_{i+1}\lambda^{2\Psi_{1}+\Lambda_{i+1}}L_i^{-(\lambda-\Lambda_{i+1}+4 \Psi_1)}
    \\
    &{\leq}
    \sum_{\lambda\geq \Lambda_i}
    2^{\Psi_{i+1}}
    \left(1+\frac{\eps}{10}\right)^{\lambda/2}  
    \lambda^{6\Psi_{1}+\Lambda_{i+1}+1}
    \Delta^{\Psi_{1}}
    \frac
    {
      T_{i}^{\lambda-\Lambda_{i+1}}
    }
    {
        L_i^{4\Psi_1} L_{i}^{\lambda-\Lambda_{i+1}}
    }
    \\
    &{\leq}
    \sum_{\lambda\geq \Lambda_i}
    2^{\Psi_{i+1}}
    \left(1+\frac{\eps}{10}\right)^{\lambda/2} \lambda^{6\Psi_{1}+\Lambda_{i+1}+1}
    \frac{\Delta^{\Psi_1}}{L_i^{4\Psi_1}}
    \left(
    \frac
    {
        T_i
    }
    {
        L_{i}
    }
    \right)^{\lambda-\Lambda_{i+1}}
    \\
    &
    {\leq}
    \Delta^{-3\Psi_{1}}
    \sum_{\lambda\geq \Lambda_i}
    2^{\Psi_{i+1}}
    \lambda^{6\Psi_{1}+\Lambda_{i+1}+1}
    \eps_1^{-4\Psi_1}\left(1+\frac{\eps}{10}\right)^{-(\lambda/2-\Lambda_{i+1})} \\
    &
    {\leq}
    \Delta^{-3\Psi_{1}}
    \sum_{\lambda\geq \Lambda_i}
    2^{\Psi_{i+1}}
    \lambda^{6\Psi_{1}+\Lambda_{1}+1}
    \eps_1^{-4\Psi_1}\left(1+\frac{\eps}{10}\right)^{-(\lambda/2-\Lambda_1)}, 
\end{align*}
where we have used that $L_i\geq \eps_1\Delta$ (Lemma~\ref{lem:L:T:close}).

Let $\lambda_0$ be the smallest integer $\lambda$ such that
$$
     2^{\Psi_{i+1}}\lambda^{6\Psi_{1}+\Lambda_{1}+1} \eps_1^{-4\Psi_1}\left(1+\frac{\eps}{10}\right)^{-(\lambda/4-\Lambda_1)}\leq 1\;.
$$
This value clearly exists since the LHS of the previous inequality goes to 0 when $\lambda\to +\infty$. Observe that in such a case,
$$
\sum_{\lambda\geq \lambda_0}      2^{\Psi_{i+1}}
 \lambda^{6\Psi_{1}+\Lambda_{1}+1}
    \eps_1^{-4\Psi_1}\left(1+\frac{\eps}{10}\right)^{-(\lambda/2-\Lambda_1)}\leq \sum_{\lambda\geq \lambda_0} \left(1+\frac{\eps}{10}\right)^{-\lambda/4}= O(1)\;.
$$
Notice that the $\eps_1$ given in Lemma~\ref{lem:L:T:close} only depends  on $\eps$. Let $\Delta$ be large enough such that
$$
\Delta \geq \Delta_0(\eps,k)= \sum_{\lambda=0}^{\lambda_0-1}       2^{\Psi_{i+1}}
\lambda^{6\Psi_{1}+\Lambda_{1}+1}
    \eps_1^{-4\Psi_1}\left(1+\frac{\eps}{10}\right)^{-(\lambda/2-\Lambda_1)}\;.
$$
Then,
\begin{align*}
 \sum_{C\in \cC^{\{c,d\}}(e)}  \p{A^{\{c,d\}}_{C}}&\leq 
 \d^{-3\Psi_1}  (\Delta_0(\eps,k)+ O(1))=  O(
 \d^{-7}) \;,
\end{align*}
since $\Psi_1\geq 3$.
\end{proof}

\begin{lem}\label{lem:applying_local_lemma}
    With positive probability, $P(i)$ holds for every $0\leq i\leq i^*$.
\end{lem}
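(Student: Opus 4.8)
The plan is to prove the statement by induction on $i$, using the Alternative version of the Weighted Lov\'asz Local Lemma (Lemma~\ref{lem:local_lemma}). The base case $i=0$ is exactly Lemma~\ref{lem:P(0)}, which asserts that $P(0)$ holds deterministically. For the inductive step, assume $P(i-1)$ holds; I want to show that $P(i)$ holds with positive probability, which by the observation preceding Lemma~\ref{lem:bounds} amounts to showing $\p{\cap_{E\in\cE}\overline{E}}>0$ for the event collection $\cE=\{A^{\{c,d\}}_C, B_e, C_{v,c}, D_{v,c}\}$. I will apply Lemma~\ref{lem:local_lemma} with $\beta$ the concentration constant from Lemma~\ref{lem:concentration} and $\nu>0$ chosen so that $\beta=\expo^{2\log 2/\nu}$.

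The first step is to record the inputs (a)--(c) of Lemma~\ref{lem:local_lemma}. For the bounds $h_r$ on the event probabilities: for $B_e$, $C_{v,c}$, $D_{v,c}$ take $h_r=\expo^{-\Omega(\Delta^{1/3})}$ from Lemma~\ref{lem:bounds}, and for $A^{\{c,d\}}_C$ take $h_r$ to be the bound $\Lambda_{i+1}\lambda_i(C)^{2\Psi_1+\Lambda_{i+1}}L_i^{-(\lambda_i(C)-\Lambda_i/2+4\Psi_1)}$ from Lemma~\ref{lem:bounds2}; crucially the second half of Lemma~\ref{lem:bounds2} gives $h_r\leq \Delta^{-\lambda_i(C)/3}$. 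The second step is the choice of weights $w_r$: set $w_r=1$ for the ``easy'' events $B_e$, $C_{v,c}$, $D_{v,c}$, and set $w_r$ for $A^{\{c,d\}}_C$ proportional to $\lambda_i(C)$ — say $w_r=\lambda_i(C)/\log_\Delta\beta$ or a similar scaling — so that $\beta^{w_r}h_r$ for the cycle event is at most $\beta^{O(\lambda_i(C))}\Delta^{-\lambda_i(C)/3}$, which is $\leq 1/2$ once $\Delta$ is large (this verifies condition (c) for the cycle events, while for the easy events $\beta^{1}\cdot\expo^{-\Omega(\Delta^{1/3})}\leq 1/2$ trivially).

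The third and most substantial step is to verify condition (b), $\sum_{E_s\in\cD_r}\beta^{w_s}h_s\leq w_r/\nu$, for each type of event $E_r$. Here I must describe the dependency sets $\cD_r$: each event depends only on colour assignments and coin flips of edges within bounded distance, so the number of ``easy'' events in any $\cD_r$ is $\Delta^{O(1)}$, each contributing $\beta^1\expo^{-\Omega(\Delta^{1/3})}$, so their total contribution is $\expo^{-\Omega(\Delta^{1/3})}=o(1)$ — negligible against the right-hand side. The genuinely delicate part is bounding $\sum_{C}\beta^{w_C}h_C$ over all significant cycles $C$ in $\cD_r$; but this is precisely what Lemma~\ref{lem:cycle:incremental} is designed for: summing $(1+\eps/10)^{\lambda_i(C)/2}\p{A^{\{c,d\}}_C}$ over all $C\in\cC^{\{c,d\}}(e)$ is $O(\Delta^{-7})$, and since $\beta^{w_C}=\expo^{O(\lambda_i(C))}$ can be absorbed into $(1+\eps/10)^{\lambda_i(C)/2}$ provided $\nu$ (equivalently $\beta$) is chosen close enough to $1$ — or alternatively one rescales the weights $w_C$ by a small enough constant — the sum over cycle events in any $\cD_r$ is $O(\Delta^{-7})$, comfortably below $w_r/\nu\geq 1/\nu$ for an easy event $E_r$, and below $w_C/\nu$ for a cycle event after also including the (finitely many) easy events. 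The main obstacle, and the point requiring care, is exactly this interplay: one must choose $\beta$ (hence $\nu$) and the weight-scaling constant simultaneously so that the exponential factor $\beta^{w_C}$ introduced by the weighted local lemma is dominated by the geometric decay $(1+\eps/10)^{-\lambda_i(C)/2}$ already present in Lemma~\ref{lem:cycle:incremental}; since $\eps$ is fixed and the relevant constants depend only on $\eps$, this can be arranged for $\Delta\geq\Delta_0(\eps)$ and $g\geq g(\eps)$ large. Once conditions (a)--(c) are verified, Lemma~\ref{lem:local_lemma} yields that with positive probability none of the events in $\cE$ occurs, i.e.\ $P(i)$ holds, completing the induction.
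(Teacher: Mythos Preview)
Your proposal is correct and follows the same route as the paper: induction with Lemma~\ref{lem:local_lemma}, weight $1$ on the events $B_e,C_{v,c},D_{v,c}$ and weight proportional to $\lambda_i(C)$ on the cycle events $A^{\{c,d\}}_C$, with Lemma~\ref{lem:cycle:incremental} supplying the bound on the cycle part of the dependency sum. One correction worth making explicit: the $\beta$ in Lemma~\ref{lem:local_lemma} is a \emph{free parameter}, not the Talagrand constant of Lemma~\ref{lem:concentration} (the paper overloads the symbol); the paper fixes $\beta:=(1+\eps/10)^{1/2}$ and, for a type-$A$ event, sets $w:=-\log_\Delta\p{E}$ and $h:=\p{E}$, which makes $\beta^{w_s}h_s$ line up directly with the summand $(1+\eps/10)^{\lambda_i(C')/2}\p{A^{\{c',d'\}}_{C'}}$ of Lemma~\ref{lem:cycle:incremental} and removes the need for the extra tuning you describe.
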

\begin{proof}
    This can be proved by induction. The basis of the induction is that
    $P(0)$ holds, which does deterministically by Lemma~\ref{lem:P(0)}.
    Now consider $i \geq 1$. By
    induction hypothesis, $\p{P(i-1)}> 0$.  Thus it suffices to show that
    $\p{P(i)} > 0$, provided that $P(i-1)$ holds.
    We will apply Lemma~\ref{lem:local_lemma} to $\cE$ in order to prove that
    $$
    \p{P(i)} = \p{\cap_{E \in \cE} \overline{E}} > 0.
    $$

     Let $\beta:=(1+\eps/10)^{1/2}$. Then $\nu=
     \frac{2\log{2}}{\log\beta} =\frac{4\log{2}}{\log(1+\eps/10)} $.
    For a type $A$ event $E$, we define its weight
    $w:=-{\log_\Delta(\p{E})}> 0$ and we set  $h:=\Delta^{-w}=\p{E}$.  Observe
    also that, 
    \[
        \beta^{w} h
        = \beta^{-\log_{\Delta}\p{E}}\cdot \p{E}
        = 
        (\p{E})^{1-\log_{\Delta}\beta}
        \le \sqrt{\p{E}}
        \le 1/2\;,
    \]
when $\Delta$ and \(g\) are large enough with respect to \(\beta\).

    For an event $E$ of other type, define its weight $w:=1$ and set
    $h:= \expo^{-\Omega(\Delta^{1/3})}$. Then ${\p{E}} \le h$ by
    Lemma~\ref{lem:bounds}. Also note that $\beta^{w} h = \beta
    \expo^{-\Omega\left( \Delta^{1/3} \right)} \le 1/2$, when $\Delta$ is large
    enough.

    Thus, it remains to show that for every event $E_r$ the sum 
    $\sum_{E_s\in \cD_r}\beta^{w_s}h_s$ is not
    too large.  Consider the case where $E_r=A_C^{\{c,d\}}$ for some
    {$\{c,d\}$-{significant}} cycle $C$. Let us first construct the set $\cD_r$. For any
    other cycle $C'$ and pair of colours $(c',d')$, $A_{C'}^{(c',d')}\in\cD_r$
    if and only if $C$ and $C'$ intersect in at least one {free} edge. For
    an event $E_s$ of type $B_e$, $C_{v,c}$ or $D_{v,c}$, $E_s\notin \cD_r$ if it
    corresponds to vertices or edges at distance at least $4$ from any
    {free} edge of $C$.


    {Recall that, by the second part of Lemma~\ref{lem:bounds2}, we have that if $E_r=A^{\{c,d\}}_C$
for some cycle $C$, then $\omega_r \geq \lambda_i(C)/3$.}
    We use Lemma~\ref{lem:cycle:incremental} on each {free} edge $e\in E(C)$ in order to bound the following sum
    \begin{align*}
    \sum_{E_s \in \cD_r} \beta^{w_s} h_s & =
    \sum_{(c',d')}\sum_{\substack{e\in E(C) \\ e \text{ {free}}}}\sum_{C'\in \cC^{(c',d')}(e)} \beta^{w_s} \p{A_{C'}^{(c',d')}}
    +\sum_{\substack{E_s \in \cD_r\\ E_s \text{ not type } A}} \beta^{w_s} h_s\\
    &\leq  \sum_{(c',d')}\sum_{\substack{e\in E(C) \\ e \text{ {free}}}} O(\Delta^{-7})+
    \sum_{\substack{E_s \in \cD_r\\ E_s \text{ not type } A}} \beta \expo^{-\Omega\left( \Delta^{1/3} \right)} \\
    &= \lambda_{i}(C) O(\Delta^{-5})+\lambda_{i}(C)O\left(\Delta^4 \expo^{-\Omega(\Delta^{1/3})}\right)\\
    &= \lambda_{i}(C)O(\Delta^{-5})
    \le  \frac{w_r}{\nu}\;.
    \end{align*}


    Now consider an event $E_r = B_e$. For some event $A_{C'}^{(c',d')}$ to depend on $E_r$, the
    cycle $C'$ must go through a {free} edge
    within distance $4$ to $e$. Therefore
    \begin{align*}
        \sum_{E_s \in \cD_r} \beta^{w_s} h_s & =
        \sum_{\substack{f \text{ within} \\ \text{distance $4$ of $e$}}}
        \sum_{(c',d')} \sum_{C'\in \cC^{(c',d')}(f)} \beta^{w_s} \p{A_{C'}^{(c',d')}}
        +\sum_{\substack{E_s \in \cD_r\\ E_s \text{ not type } A}} \beta^{w_s} h_s\\
        &\leq  \sum_{\substack{f \text{ within} \\ \text{distance $4$ of $e$}}} \sum_{(c',d')} O(\Delta^{-7})+
        \sum_{\substack{E_s \in \cD_r\\ E_s \text{ not type } A}} \beta \expo^{-\Omega\left( \Delta^{1/3} \right)} \\
        & = O(\Delta^{-1}) + O(\Delta^{4} \expo^{-\Omega(\Delta)})  \le
        \frac {w_r} \nu.
    \end{align*}

    The corresponding inequalities for events of type $C_{v,c}$ or
    $D_{v,c}$ can be shown in a similar way. Hence, it follows from
    Lemma~\ref{lem:local_lemma} that $P(i)$ is satisfied with positive
    probability. 
\end{proof}

\begin{proof}[Proof of Lemma~\ref{lem:pre_colouring}]
 From Lemma~\ref{lem:applying_local_lemma}, $P(i^*)$ holds with positive probability. Thus, the lemma follows directly from Lemma~\ref{lem:finalP}.
\end{proof}

\section{Proof of Lemma~\ref{lem:pre_finishing}}\label{sec:finish}
In this section we prove that the partial colouring obtained
at the end of the iterative colouring procedure can be extended to a complete acyclic edge colouring of \(G\).
\begin{proof}[Proof of Lemma \ref{lem:pre_finishing}]
    {The idea of the proof is to assign to each uncoloured edge a colour chosen uniformly at
    random from its list and to show that with positive probability, the obtained colouring satisfies the desired properties.}

    Let $L := \gamma \Delta$ and $T := {\gamma^2 \Delta/128}$.
    For every two adjacent uncoloured edges $e,f$ and every colour $c$, let $B_{e,f}^c$ denote the
    event that $e,f$ are both assigned colour $c$. {Consider a total order of the set of edges $E(G)$}. For every cycle $C$ and every pair
    of colours $\{c,d\}$, let $A^{\{c,d\}}_{C}$ denote the event that $C$ is bicoloured using the pair of colours $\{c,d\}$,
    {such that the smallest edge in $C$ is coloured with $c$}. Let $\cE$ denote the set of these events.

    Let $p=:1/L$. If an event $E_r$ is of type $B$, let $w_r := 2$. Then,
    $$\p{E_r} \le \left(\frac 1 L\right)^{2} = p^{w_r}.$$
    For every cycle $C$, we denoted by $\lambda(C)$ its number of uncoloured edges.
    If an event $E_r$ is of type $A$ and corresponds to a cycle a $C$, let $w_r := \lambda(C)$. As before,
    $$\p{E_r} \le \left(\frac 1 L\right)^{\lambda(C)} = p^{w_r}.$$

    By the hypothesis of the lemma, every cycle that can become completely bicoloured using the pair of colours $\{c,d\}$,
    has at least {$3$} uncoloured edges.
    Then, for every uncoloured edge $e$, every pair of colours $\{c,d\}$ and every $\lambda\geq 3$,
    there are at most $T^{\lambda-{2}}$ cycles $C$ that contain edge $e$ and that can become bicoloured. For the cycles that cannot become bicoloured, we have
    $\p{A^{\{c,d\}}_{C}} = 0$, and they do not have any contribution. As in Lemma~\ref{lem:cycle:incremental}, we observe that when {we construct} a cycle starting at the edge $e$, at every step we either select a coloured edge with the right colour (at most one option) or we {select one of the} at most $T$ different uncoloured edges that have the right colour on their reserved set of colours.  The $-2$ in the exponent is standard in these type of countings, and comes from the fact that $e$ is uncoloured and that the last uncoloured edge selected in the cycle has only one way to be chosen instead of $T$.

    {As there are in total at most \( (1+\varepsilon) \Delta\) colours and an {uncoloured} edge can choose only
        \(L\) of them, there are at most \( (1+\varepsilon) \Delta L < 2 \Delta L\) choices of colours $\{c,d\}$  such that
        \(\p{A^{\{c,d\}}_C}>0\) for some cycle \(C\) going through $e$.}

    Notice that if
    two cycles do not share an uncoloured edge, their corresponding type $A$
    events are independent.
    Also note that the colour an edge is assigned, depends on at most $2 L
    T$ events of type $B$.
    {Let $\cC^{\{c,d\}}(e,\lambda)$ be the set of cycles $C$ that contain edge $e$ that can be bicoloured using the pair of colours $\{c,d\}$.}
    Therefore, if
    $E_r=A_C^{\{c,d\}}$ and $\cD_r$ is a set of events such that $\cE\setminus
    \cD_r$ is mutually independent from $E_r$, then
    \begin{align*}
        \sum_{E_s \in \cD_r} (2p)^{w_s}
        &
        \le \sum_{\substack{e \in E(C): \\ \text{$e$ uncoloured}}} \left( \frac 2 L \right)^2 2 L T
        + \sum_{\substack{e \in E(C): \\ \text{$e$ uncoloured}}} \sum_{(c',d')} \sum_{\lambda \ge {3}}
        \sum_{\substack{C' \in \cC^{(c',d')}(e, \lambda):\\\p{A_{C'}^{(c',d')} \ge 0}}}
        \left( \frac 2 L \right)^\lambda \\
        &
        \le \lambda(C) \left( \frac 2 L \right)^2 2 L T
        + \lambda(C) \cdot {2 \Delta}L \sum_{\lambda \ge {3}}  T^{\lambda {-2}} \left( \frac
        2 L \right)^\lambda \\
        &
        = 8{\lambda(C)} \cdot \frac{T}{L}
        +
        8\lambda(C)
        \cdot\frac
            {\Delta}
            {L}
            \sum_{\lambda \ge {3}}
                \left(\frac {{2}T} {L}  \right)^{\lambda-2} \\
        &
        < \frac{\gamma\lambda(C)}{32} + \frac{8\lambda(C)}{64-\gamma}
        < \frac {\lambda(C)} 2
        = \frac {w_r} 2 \;.
    \end{align*}

    If $E_r=B^c_{e,f}$ for a pair of edges $(e,f)$ and a colour $c$, then it is independent to all but $4 L T$ events of type $B$. Therefore
    \begin{align*}
        \sum_{E_s \in \cD_r} (2p)^{w_s}
        &
        \le \left( \frac 2 L \right)^2 4 L T
        + \sum_{(c',d')} \sum_{\lambda \ge {3}}
        \sum_{\substack{C' \in \cC^{(c',d')}(e, \lambda) \cup \cC^{(c',d')}(f, \lambda):\\\p{A_{C'}^{(c',d')} \ge 0}}}
        \left( \frac 2 L \right)^\lambda \\
        &
        \le
        \left( \frac 2 L \right)^2 4 L T
        +
        {2 \Delta}L \sum_{\lambda \ge {3}}  2 T^{\lambda {-2}} \left( \frac
        2 L \right)^\lambda
        \\
        &
        =  16\cdot \frac{T}{L}
        +  16 \cdot
        \frac
            {\Delta}
            {L}
            \sum_{\lambda \ge {3}}
                \left(\frac {2T} {L}  \right)^{\lambda-2} \\
        &
        \le
        \frac{\gamma}{8}
        +
        \frac{16}{64-\gamma}
        < 1
        = \frac {w_r} 2 \;.
    \end{align*}
    We use the original version of the Weighted Lov\'asz Local Lemma \cite[p.~221]{Molloy2002graph} to conclude that, with positive probability, none of the events in $\cE$ happen.
\end{proof}
\vspace{-0.1cm}
Now, let {$\{S_v:\, v\in V(G)\}$} be the collection of {subsets of $[(1+\eps)\Delta]$ that
satisfies} properties (A.1)--(A.3) whose existence is proved in
    Lemma~\ref{lem:pre_reservation}.
Let \(\chi\) be a partial edge colouring of \(G\) with properties (B.1)--(B.3) that is guaranteed to
exist by Lemma \ref{lem:pre_colouring} using this collection of sets. Let \(\gamma := \frac{\varepsilon^2}{18}\).  {Assign to} each
uncoloured edge \(e = uv\) in \(\chi\) a list of \(\gamma \Delta\) colours chosen arbitrarily from
\(S_u \cap S_v\). This is possible since by property $(A.2)$ in Lemma~\ref{lem:pre_reservation},
these sets have size at least $\gamma \Delta$. Then, properties (B.1)--(B.3) of \(\chi\) imply that
all {the hypotheses} of Lemma~\ref{lem:pre_finishing} are satisfied and we can extend \(\chi\) into a complete acyclic edge colouring
of \(G\), which finishes the proof of Theorem~\ref{thm:main}.
\vspace{-0.5cm}
\bibliography{citation}{}
\bibliographystyle{plain}

\end{document}